\begin{document}

\newtheorem{theorem}{Theorem}[section]
\newtheorem{lemma}[theorem]{Lemma}
\newtheorem{proposition}[theorem]{Proposition}
\newtheorem{corollary}[theorem]{Corollary}
\newtheorem*{hyperbolic_thm}{Hyperbolic Theorem (\ref{minimal_element_orbifold})}
\newtheorem*{toroidal_thm}{Toroidal Examples (\ref{bounded}, \ref{closed})}
\theoremstyle{definition}
\newtheorem{definition}[theorem]{Definition}
\newtheorem{notation}[theorem]{Notation}
\newtheorem{exercise}[theorem]{Exercise}
\newtheorem{question}[theorem]{Question}
\theoremstyle{remark}
\newtheorem{remark}[theorem]{Remark}
\newtheorem{example}[theorem]{Example}
\numberwithin{equation}{section}

\def\Q{{\mathbb Q}}
\def\Z{{\mathbb Z}}
\def\R{{\mathbb R}}
\def\RP{{\mathbb{RP}}}
\def\H{{\mathbb H}}
\def\F{{\mathcal F}}
\def\G{{\mathcal G}}
\def\MCG{{\text{MCG}}}
\def\rank{{\text{rank}}}
\def\isom{{\text{Isom}}}
\def\vol{{\text{vol}}}
\def\id{{\text{id}}}
\def\Sol{{\text{Sol}}}
\def\homeo{{\text{Homeo}}}
\def\PSL{{\text{PSL}}}
\def\GL{{\text{GL}}}
\def\C{{\mathbb C}}
\def\flip{{\sim}} 

\newcommand{\norm}[1]{\left\Vert#1\right\Vert}
\newcommand{\abs}[1]{\left\vert#1\right\vert}
\newcommand{\set}[1]{\left\{#1\right\}}
\newcommand{\Real}{\mathbb R}
\newcommand{\eps}{\varepsilon}
\newcommand{\To}{\longrightarrow}
\newcommand{\BX}{\mathbf{B}(X)}
\newcommand{\A}{\mathcal{A}}

\title{On fibered commensurability}
\author{Danny Calegari}
\address{Department of Mathematics \\ Caltech \\ Pasadena CA 91125 USA}
\email{dannyc@its.caltech.edu}
\author{Hongbin Sun}
\address{Department of Mathematics \\ Princeton University \\ Princeton NJ 08544 USA}
\email{hongbins@math.princeton.edu}
\author{Shicheng Wang}
\address{School of Mathematical Sciences \\ Peking
University \\ Beijing 100871, China}
\email{wangsc@math.pku.edu.cn}

\date{\today}
\begin{abstract}
This paper initiates a systematic study of the relation of
commensurability of surface automorphisms, or equivalently, fibered
commensurability of $3$-manifolds fibering over $S^1$. We show that
every hyperbolic fibered commensurability class contains a unique
minimal element. The situation for toroidal
manifolds is more complicated, and we illustrate a range of
phenomena that can occur in this context.
\end{abstract}
\maketitle

\section{Introduction}

The main purpose of this paper is to study the equivalence relation of {\em commensurability} of
surface automorphisms. Informally, two surface automorphisms are commensurable if they lift
to automorphisms of a finite covering surface that have nontrivial common powers. Equivalently, a
surface automorphism determines a foliation of a $3$-manifold by closed surfaces, and two automorphisms
are commensurable if their corresponding $3$-manifolds admit common finite covers for which the
pulled-back foliations are isotopic. Thus commensurability of surface automorphisms is a special
case of the study of commensurability of $3$-manifolds equipped with a certain kind of geometric
structure; again informally, we call this commensurability relation {\em fibered commensurability}.

The relation of commensurability of $3$-manifolds is well-studied,
see e.g.\ \cite[Chap. 6]{Thurston_notes}, \cite{Borel, Macbeath,
Neumann, Behrstock_Neumann} and so on. When studying
commensurability in a given context, the most important distinction
to make is between those commensurability classes that admit {\em
finitely many} minimal elements, and those that admit {\em
infinitely} many. For example, amongst hyperbolic $3$-manifolds,
this is precisely the distinction between nonarithmetic and
arithmetic commensurability classes, see e.g.\  \cite{Margulis,
Borel}. This distinction has a cleaner statement if one is prepared
to work in the category of orbifolds: each commensurability class of
nonarithmetic hyperbolic $3$-manifolds contains a unique minimal
element.

Fibered commensurability is more rigid than (ordinary) commensurability. However, a given $3$-manifold
can fiber in infinitely many different ways. For Seifert manifolds, there is exactly one
fibered commensurability class of surface bundles of all closed (resp.\ with torus boundary) Seifert
fibered manifolds whose fiber has negative Euler characteristic, and this class contains
infinitely many minimal elements. On the other hand, in the hyperbolic world
we obtain the following theorem:

\begin{hyperbolic_thm}
Every commensurability class of hyperbolic fibered pairs contains a unique (orbifold) minimal element.
\end{hyperbolic_thm}

An immediate corollary is that for a fibered hyperbolic $3$-manifold
$M$, each fibered commensurability class contains at most finitely many
fibrations of $M$; hence $M$ has either one fibered commensurability
class, or infinitely many fibered commensurability classes.

The reducible case is more complicated:

\begin{toroidal_thm}
There are examples of graph manifolds with infinitely many fibered commensurability classes,
and a single graph manifold can fiber in infinitely many ways in a single commensurability class.
\end{toroidal_thm}

As these results suggest, obstructions to commensurability of surface
automorphisms arise from their behavior on pseudo-Anosov orbits, and
near their reducing systems. We describe such obstructions in detail.

In \S~\ref{definition-simple case}, we give basic definitions
and illustrate their meaning, in the special case
of commensurability of spherical and toral automorphisms. We also recall
the Nielsen-Thurston classification of surface automorphisms, and discuss a
``normal form'' for  autormorphisms. This material is standard, and may
be skipped by the expert.

In \S~\ref{hyperbolic_section}, we study fibered commensurability of
hyperbolic manifolds, and prove Theorem~\ref{minimal_element_orbifold}.
We also list some commensurability invariants of
pseudo-Anosov automorphisms (Lemma~\ref{lambda-Delta} and
Proposition~\ref{spectrum_bounded_away_from_zero}), and describe examples that
illustrate their use.

Finally, \S~\ref{reducible_section} and \S~\ref{graph_section} are
devoted to the case of reducible automorphisms, especially of graph
manifolds. In \S~\ref{reducible_section} we define certain numerical
commensurability invariants for reducible maps (Theorem
\ref{reducible}, as well as Proposition \ref{Pi-lambda}), and give
many examples. In \S~\ref{graph_section} we give examples of graph
manifolds with infinitely many incommensurable fibrations, including
one with boundary (Example~\ref{bounded}) that also admits
infinitely many {\em commensurable} (but non-isomorphic) fibrations,
and a closed one (Example~\ref{closed}) that admits incommensurable
fibrations of the same genus.

\subsection{Acknowledgments}

The first author was partially supported by NSF grant DMS 0707130.
The third author was supported by grant no. 10631060 of NSF of China
and by Caltech Mathematics Department as a short term scholar.
The content of this paper benefited from conversations with Juan Souto.
We would like to thank the referee for some helpful suggestions.

\section{Fibered commensurability}\label{definition-simple case}

\subsection{Basic definitions}

Let $F$ be a compact surface. An {\em automorphism} $\phi$ of $F$ is an isotopy class of self-homeomorphisms
of $F$. We use the notation $(F,\phi)$ where $\phi$ is an automorphism of $F$.

\begin{remark}
When $F$ has boundary, it is more usual to study isotopy classes of self-homeomorphisms fixed pointwise
on the boundary. However, since we are interested in automorphisms which might permute boundary
components, we adhere to this nonstandard convention.
\end{remark}

One surface automorphism can ``cover'' another in two distinct ways: either topologically (in the
sense that one surface covers the other) or dynamically (in the sense that one automorphism is a
power of another). We consider covering in both senses in the sequel. More formally, we make the
following definition.

\begin{definition}
A pair $(\tilde{F},\tilde{\phi})$ {\em covers} $(F,\phi)$ if there
is a finite cover $\pi:\tilde{F} \to F$ and representative
homeomorphisms $\tilde{f}$ and $f$ of $\tilde{\phi}$ and $\phi$
respectively so that $\pi \circ \tilde{f} = f\circ \pi$ as maps
$\tilde{F} \to F$.
\end{definition}

\begin{remark}
The relation of covering is transitive: if $(F_1,\phi_1)$ covers $(F_2,\phi_2)$, and
$(F_2,\phi_2)$ covers $(F_3,\phi_3)$, then $(F_1,\phi_1)$ covers $(F_3,\phi_3)$. This follows
by appealing to a ``normal form'' for representative homeomorphisms which is compatible with
finite covers. This normal form is well-known, and summarized in Theorem~\ref{NT} and Proposition~\ref{JG}
below.
\end{remark}

An automorphism $\phi$ of $F$ determines an outer automorphism
$\phi_*$ of $\pi_1(F)$ preserving peripheral subgroups, and by the
well-known theorem of Dehn-Nielsen (see \cite{Nielsen}), this
correspondence is a bijection. A cover $\tilde{F}$ determines a
conjugacy class of subgroups $G$ of $\pi_1(F)$, and an automorphism
$\phi$ of $F$ lifts to an automorphism $\tilde{\phi}$ of $\tilde{F}$
if and only if $G$ and $\phi_*(G)$ are conjugate in $\pi_1(F)$.
However, a particular lift $\tilde{\phi}$ depends on a choice of
conjugating element. Thus a finite cover of surfaces $\tilde{F} \to
F$ might determine zero, one, or many covers of automorphisms
$(\tilde{F},\tilde{\phi}) \to (F,\phi)$ (even if $\tilde{\phi}$ is
primitive).

\begin{example}
If $\tilde{F} \to F$ is any finite cover, then $(F,\id)$ is covered
by $(\tilde{F},\psi)$ where $\psi$ is any element of the deck group
of the cover.
\end{example}

\begin{definition}\label{topologically_dynamically_definition}
Two automorphisms $(F_1,\phi_1)$ and $(F_2,\phi_2)$ are {\em
commensurable} if there is a surface $\tilde{F}$, automorphisms
$\tilde{\phi}_1$ and $\tilde{\phi}_2$ of $\tilde{F}$, and nonzero
integers $k_1$ and $k_2$, so that $(\tilde{F},\tilde{\phi}_i)$
covers $(F_i,\phi_i)$ for $i=1,2$, and if $\tilde{\phi}_1^{k_1} =
\tilde{\phi}_2^{k_2}$ as automorphisms of $\tilde{F}$. Moreover say
$(F_1,\phi_1)$ and $(F_2,\phi_2)$  are {\it topologically
commensurable} if $|k_1|=|k_2|=1$, and {\it dynamically
commensurable} if $\tilde F=F_1=F_2$.
\end{definition}

Commensurability of automorphisms is readily seen to be an equivalence relation, and is the main
object of study in this paper.

\medskip

Statements about surfaces and automorphisms can usefully be translated into statements about $3$-manifolds
with certain types of foliations. These objects --- ``fibered pairs'', to be defined below --- admit
natural generalizations to objects called {\em orbifold} fibered pairs, that are awkward to discuss
in the language of surfaces and automorphisms. Certain theorems in this paper are more elegantly stated
and proved in this category. A basic reference for the theory of orbifolds is \cite{Thurston_notes},
Chapter~13.

\begin{definition}
A {\em fibered pair} is a pair $(M,\F)$ where $M$ is a compact $3$-manifold with boundary
a union of tori and Klein bottles, and $\F$ is a foliation by compact surfaces. More generally,
an {\em orbifold fibered pair} is a pair $(O,\G)$ where $O$ is a compact $3$-orbifold, and $\G$
is a foliation of $O$ by compact $2$-orbifolds.
\end{definition}

At interior points (resp.\ boundary points) an orbifold
fibered pair $(O,\G)$ looks locally like the quotient of an open ball in $\R^3$ (resp.\ a relatively
open ball in a vertical half-space) foliated by horizontal planes by a finite group of smooth
foliation-preserving homeomorphisms.

A surface automorphism $(F,\phi)$ determines a fibered pair whose underlying manifold is an $F$
bundle over $S^1$ with monodromy $\phi$, and whose foliation is the foliation by surface fibers
(which are all homeomorphic to $F$). If we want to emphasize its dynamical origin,
we use the notation $[F,\phi]$ in the sequel to denote the fibered pair associated to the
automorphism $(F,\phi)$.

If the underlying orbifold $O$ is {\em good} (i.e.\ it admits a finite manifold cover) then $(O,\G)$
is finitely covered by a pair $(M,\F)$ where $M$ is a manifold, and every leaf of $\F$ is a compact
surface. After passing to a further $2$-fold cover if necessary, we can assume $\F$ is co-orientable,
in which case $M$ fibers over $S^1$ in such a way that the leaves of $\F$ are the fibers.

\begin{definition}\label{fibered_commensurable}
A fibered pair $(\tilde{M},\tilde{\F})$ {\em covers} $(M,\F)$ if there is
a finite covering of manifolds $\pi:\tilde{M} \to M$ such that
$\pi^{-1}(\F)$ is isotopic to $\tilde{\F}$. Two fibered pairs
$(M_1,\F_1)$ and $(M_2,\F_2)$ are {\em commensurable} if there is a
third fibered pair $(\tilde{M},\tilde{\F})$ that covers both.
\end{definition}

If $(M_i,\F_i)$ for $i=1,2$ are fibered pairs with co-orientable foliations, then
they are commensurable in the sense of Definition~\ref{fibered_commensurable}
if and only if the associated surface automorphisms
are commensurable. Thus, the category of fibered pairs enlarges the category of surface
automorphisms in such a way that the definition of commensurability of a surface automorphism
is the same, whichever category we use.

To stress that the definition of commensurability of fibered pairs depends on both the
underlying $3$-manifold and the foliation, we call this equivalence relation {\em fibered commensurability}.

\medskip

The relation of covering is transitive, but it is not yet a partial order because of the existence
of automorphisms of finite order. We must take such examples into account in order
to define minimal elements with respect to commensurability.

\begin{definition}
We say that two fibered pairs $(M, \F)$ and $(N,\G)$ are {\em covering equivalent} if each covers
the other. Call a covering equivalence class {\em minimal} if no representative covers any element
of another covering equivalence class.
\end{definition}

The relation of covering descends to a transitive relation on covering equivalence classes, and
defines a partial order on such classes. Minimal classes are minimal with respect to this
partial order.

\begin{remark} Each covering equivalence class of
fibered pairs $[F, \phi]$ contains exactly one fibered pair unless
$\phi$ is periodic.   In the periodic case, $(F,\phi)$ and
$(G,\psi)$ are in the same covering equivalent class if and only if
$F=G$ and both $\phi$ and $\psi$ generate the same finite cyclic
group. With this understood, in the sequel we are relaxed in our terminology,
and use the word ``minimal element'' when we really mean ``minimal class''.
\end{remark}

\subsection{Simple cases}
For simplicity, we usually restrict attention to the case that $F$ (and therefore $M$) is
closed. However, because of the nature of the theory of surface automorphisms,
to really understand this case we are forced to consider surfaces (and $3$-manifolds) with
boundary, associated to the restrictions of automorphisms to invariant subsurfaces.

Evidently, the sign of $\chi(F)$ is a commensurability invariant of $(F,\phi)$. In the case of
fibered pairs (of good orbifolds), all leaves have the same sign, so we can speak unambiguously
about fibered pairs with spherical, Euclidean, or hyperbolic leaves.
We first discuss the situation when $\chi(F) \ge 0$.

\begin{example}[Spherical automorphisms]
There is one commensurability class consisting of the bundles $S^2 \times S^1$
and $S^2 \tilde{\times} S^1$, each foliated by spheres, and $\RP^3 \# \RP^3$
which can be thought of as an $S^2$ bundle over a mirror orbifold. The
elements $S^2 \tilde{\times} S^1$ and $\RP^3 \# \RP^3$ are minimal.
\end{example}

\begin{example}[Toral automorphisms]
The mapping class group of a torus is isomorphic to $\GL(2,\Z)$, and every automorphism has a
linear representative. An automorphism can be periodic, reducible, or Anosov. From elementary
linear algebra, automorphisms in different classes are not commensurable. We discuss
each case in turn.
\begin{enumerate}
\item{Periodic case: there is only one commensurability class; moreover
there are exactly two minimal elements,
corresponding to the periodic automorphisms of order 4 and 6 on a square and hexagonal
torus respectively.}
\item{Reducible case: as automorphisms, each  map $(T, \phi)$ is
represented by a matrix which can be conjugated into the form
$$ \phi \sim \pm\left( \begin{array}{cc} 1 & n \\ 0 & 1 \\ \end{array} \right)$$
where $n\neq 0$. So there is only one commensurability class and two minimal
elements, corresponding to the conjugacy classes of matrices
$$\phi \sim \left( \begin{array}{cc} 1 &  1\\  0 & 1 \\ \end{array} \right) \text{ or }
\left( \begin{array}{cc} -1 & 1 \\ 0 & -1 \\ \end{array} \right)$$}
\item{Anosov case: the resulting Sol manifolds are commensurable if and
only if they are fibered commensurable, which occurs if and only if
the logarithms of the dilatations of the automorphisms are
commensurable as real numbers. Hence there are infinitely many
fibered commensurability classes.}
\end{enumerate}
\end{example}

\subsection{Standard form for surface automorphisms}
In the remainder of the paper therefore we concentrate on the case
of surfaces $F$ with $\chi(F)<0$. Furthermore, unless we explicitly
say to the contrary, all surfaces $F$ are assumed to be compact and
connected.

A commensurability between automorphisms restricts to a commensurability between the underlying
surfaces. A complete set of commensurability invariants of compact surfaces
are the sign of Euler characteristic, and the property of possessing (or not possessing)
a nonempty boundary.

\begin{lemma}\label{surfaces_commensurable}
Let $F_1$ and $F_2$ be compact surfaces with $\chi<0$. If both or neither have nonempty
boundary, they are commensurable. Otherwise they are incommensurable.
\end{lemma}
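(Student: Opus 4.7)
The plan is to handle the two directions of the biconditional separately. The incommensurability direction is immediate from the definition of covering: any finite cover of a closed surface is closed, while any finite cover of a surface with non-empty boundary has non-empty boundary. So if $F_1$ is closed and $F_2$ has boundary, no surface $\tilde F$ can cover both, ruling out commensurability.

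For the commensurability direction, I will treat the closed and the bounded cases separately. In the closed case, after passing to orientation double covers we may assume both $F_i$ are orientable of genus $g_i \geq 2$. Using that $\Sigma_{1+d(g-1)}$ is a cyclic $d$-fold cover of $\Sigma_g$ for every $d \geq 1$ (for instance via a surjection $H_1(\Sigma_g) \to \Z/d$), taking $N$ to be any common multiple of $g_1-1$ and $g_2-1$ produces $\Sigma_{N+1}$ as a common finite cover of $\Sigma_{g_1}$ and $\Sigma_{g_2}$.

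In the bounded case, the fundamental groups are free of rank at least $2$. It suffices to show that every compact surface $F$ with non-empty boundary and $\chi(F) < 0$ is commensurable with the pair of pants $P = \Sigma_{0,3}$. The plan is to build a common finite cover by choosing surjections $\rho_F \colon \pi_1(F) \to G_F$ and $\rho_P \colon \pi_1(P) \to G_P$ to finite groups whose kernels define homeomorphic covers. The covering degrees fix the Euler characteristic, and for large enough degree the covers can be arranged to be orientable (pass to a further double cover if necessary). The hardest part will be matching the number of boundary components, since in a degree-$d$ cover a boundary circle $c$ lifts to $d/\mathrm{ord}(\rho(c))$ components. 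I would resolve this by choosing the degrees large enough that the ranges of admissible boundary counts for covers of $F$ and $P$ overlap, and then constructing $\rho_F$ and $\rho_P$ explicitly --- for example into symmetric groups with prescribed cycle structure on peripheral generators --- to realize a common boundary count on both sides, yielding homeomorphic covers and hence the desired commensurability.
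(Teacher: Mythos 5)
Your first direction and your closed case are complete and correct: properness of covering maps settles incommensurability of a closed surface with a bounded one, and the cyclic covers $\Sigma_{1+d(g-1)}\to\Sigma_g$ together with a common multiple of $g_1-1$ and $g_2-1$ (after passing to orientation double covers) settle the closed case. Note that the paper itself offers no argument here beyond ``the proof is elementary; see \cite{Ma}'', so the real content of the lemma is exactly the bounded case, and that is where your write-up stops being a proof and becomes a plan. Reducing to commensurability with the pair of pants is fine (transitivity via fiber products of covers), but the crux --- that for suitable degrees one can actually realize a common topological type, i.e.\ simultaneously match Euler characteristic, orientability, \emph{and} the number of boundary components --- is precisely Massey's realization theorem for finite covers of bordered surfaces, and you assert it (``I would resolve this by \dots constructing $\rho_F$ and $\rho_P$ explicitly'') rather than prove it. The issue is not vacuous: in a permutation representation of $\pi_1(\Sigma_{0,3})=F(x,y)$ you may choose the cycle types of $\rho(x)$ and $\rho(y)$ freely, but the cycle type of $\rho(xy)$ is then forced, so ``prescribed cycle structure on peripheral generators'' cannot be imposed independently on all boundary circles; one must check that the achievable triples still sweep out enough boundary counts (Massey's theorem says they do: over each boundary circle of the base any partition of the degree occurs, subject only to transitivity). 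To close the gap, either carry out that combinatorial construction --- e.g.\ show that for each boundary circle one can independently prescribe the number of preimage components between $1$ and $n$, then choose degrees $n_1,n_2$ with $n_1\chi(F_1)=n_2\chi(F_2)$ and a boundary count in the overlap of the two admissible ranges --- or simply invoke \cite{Ma} as the paper does.
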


The proof is elementary; see e.g.\ \cite{Ma}. Since every compact surface
orbifold with $\chi<0$ is good, the lemma extends to orbifolds.

\begin{notation}
Suppose $\Gamma$ (resp.\ $F'$) is a union of circles  (resp.\ a
compact sub-surface) in $F$. Let $F\setminus \Gamma$ (resp.\
$F\setminus F'$) denote the compact surface obtained by
splitting $F$ along $\Gamma$ (resp.\ removing $\text{int} F'$, the
interior of $F'$).
\end{notation}

Recall the Nielsen-Thurston classification of surface automorphisms.
See e.g.\ \cite{Th1, FLP} for details.

\begin{theorem}[Thurston]\label{NT}
Let $\phi$ be an automorphism of a compact surface $F$. Then the isotopy class of $\phi$ has
a representative (which by abuse of notation we continue to denote by $\phi$) so that either
\begin{enumerate}
\item{$\phi$ has finite order, and $[F,\phi]$ is a Seifert manifold with $\H^2 \times \R$
geometry; or}
\item{$\phi$ is pseudo-Anosov --- i.e.\ $F$ admits a pair of transversely measured singular
foliations $\mathfrak{F}_s$ and $\mathfrak{F}_u$ with measures $\mu_s,\mu_u$, and there is
a real number $\lambda > 1$ called the {\em dilatation} so that $\phi$ takes each
foliation to itself, stretching $\mu_u$ by $\lambda$ and compressing $\mu_s$ by $1/\lambda$ ---
and the interior of $[F,\phi]$ admits a complete hyperbolic structure of finite volume; or}
\item{$\phi$ is reducible --- i.e.\ there is a minimal non-empty embedded $1$-manifold
$\Gamma$ in $F$ with a $\phi$-invariant tubular neighborhood $N(\Gamma)$ such that on each
$\phi$-orbit of $F\setminus N(\Gamma)$ the restriction of $\phi$ is either finite order or
pseudo-Anosov, and $[F,\phi]$ is a $3$-manifold with a JSJ decomposition (whose tori correspond
to the $\phi$ orbits of $\Gamma$) into Seifert fibered and hyperbolic pieces.}
\end{enumerate}
\end{theorem}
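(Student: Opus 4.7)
The plan is to follow Thurston's original approach via his compactification of Teichm\"uller space. Let $\mathcal{T}(F)$ denote the Teichm\"uller space of marked finite-area hyperbolic structures on $F$ (with totally geodesic boundary if $\partial F \neq \emptyset$). The mapping class $\phi$ acts on $\mathcal{T}(F)$ by change of marking. The key input is Thurston's compactification $\overline{\mathcal{T}(F)} = \mathcal{T}(F) \cup \mathcal{PMF}(F)$, where $\mathcal{PMF}(F)$ is the space of projective classes of transversely measured foliations on $F$; the space $\overline{\mathcal{T}(F)}$ is a closed ball of the appropriate dimension, to which the $\phi$-action extends continuously.

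By Brouwer's fixed-point theorem, $\phi$ has a fixed point $x\in\overline{\mathcal{T}(F)}$, and I would split into cases based on where $x$ lies. If $x\in\mathcal{T}(F)$, then $\phi$ fixes a hyperbolic metric, so a conjugate representative is a genuine isometry; since the isometry group of a finite-type hyperbolic surface is finite, this yields case (1), and $[F,\phi]$ fibers over the quotient orbit orbifold with $\H^2\times\R$ geometry. If $x = [\mathfrak{F}_u] \in \mathcal{PMF}(F)$ and $\mathfrak{F}_u$ is \emph{filling} --- every essential simple closed curve has positive intersection number with $\mathfrak{F}_u$ --- one runs the fixed-point argument a second time (e.g.\ on $\phi^{-1}$, or on a dual piecewise-linear action) to produce a second fixed foliation $[\mathfrak{F}_s]$ transverse to $\mathfrak{F}_u$. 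The scaling factor on transverse measures is then a well-defined dilatation $\lambda>1$, giving case (2); Thurston's hyperbolization theorem for pseudo-Anosov mapping tori supplies the finite-volume hyperbolic structure on the interior of $[F,\phi]$. If instead the fixed foliation $\mathfrak{F}$ is non-filling, then the set of isotopy classes of essential simple closed curves $\gamma$ with $i(\gamma,\mathfrak{F})=0$ is non-empty and $\phi$-invariant, and one extracts a minimal (hence canonical) $\phi$-invariant sub-multicurve $\Gamma$ to serve as the reducing system in case (3).

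To complete case (3), I would cut along a $\phi$-invariant tubular neighborhood $N(\Gamma)$ and apply the theorem inductively to the restriction of a suitable power of $\phi$ to each orbit of components of $F\setminus N(\Gamma)$; the induction terminates because the total negative Euler characteristic is strictly smaller on each piece. Reassembling, the finite-order restrictions give Seifert-fibered blocks of $[F,\phi]$, the pseudo-Anosov restrictions give hyperbolic blocks, and the mapping tori of the annular components of $N(\Gamma)$ become the JSJ tori of $[F,\phi]$. Minimality of $\Gamma$ ensures that no two adjacent Seifert blocks fuse across one of these tori into a single Seifert piece, so the decomposition really is the canonical JSJ decomposition.

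The main obstacles are, in roughly decreasing order of depth: (i) the existence of Thurston's compactification $\overline{\mathcal{T}(F)}$ with a continuous $\MCG(F)$-action, which rests on the theory of measured foliations (or laminations) and the geometric intersection pairing --- this is essentially the content of \cite{FLP} and is the main technical substrate; and (ii) Thurston's hyperbolization theorem for fibered $3$-manifolds, invoked in case (2) to upgrade the pseudo-Anosov dynamics to genuine hyperbolic geometry on the interior of $[F,\phi]$. Both would be used as black boxes. The combinatorial extraction of $\Gamma$ in case (3), the induction, and the identification of the JSJ pieces are comparatively routine once the compactification framework is in hand.
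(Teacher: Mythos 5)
The paper gives no proof of this statement --- it is quoted as the Nielsen--Thurston classification with a citation to \cite{Th1, FLP} --- and your sketch is exactly the standard Thurston/FLP argument via the action on the compactified Teichm\"uller space, so it matches the intended source. The one place you compress more than the actual argument allows is the passage from a single fixed filling projective foliation to the transverse pair with reciprocal dilatations (and the arational case with scaling factor $1$, which must be shown to be periodic rather than pseudo-Anosov); both points are genuinely part of the measured-foliation machinery of \cite{FLP} that you explicitly invoke as a black box, so this is a matter of acknowledged reliance rather than a gap.
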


In the sequel, we will need more precise control over the normal form of $\phi$ near the
boundary of a subsurface on which $\phi$ is pseudo-Anosov. We say a representative pseudo-Anosov
map $\phi$ on $F$ with boundary is in {\em standard form} if it satisfies the following two
conditions:
\begin{enumerate}
\item{near each boundary circle, two $p$-pronged measured transverse
foliations $(\mathfrak{F}^s, \mu^s)$ and $(\mathfrak{F}^u, \mu^u)$
have the form indicated in Figure~1 (illustrating the case $p$=3); and}
\item{on each $\phi$-orbit on $\partial F$, the restriction of $\phi$ is periodic.}
\end{enumerate}

\begin{center}
\psfrag{a}[]{$\mathfrak{F}_s$} \psfrag{b}[]{$\mathfrak{F}_u$}
\includegraphics[height=2.85in]{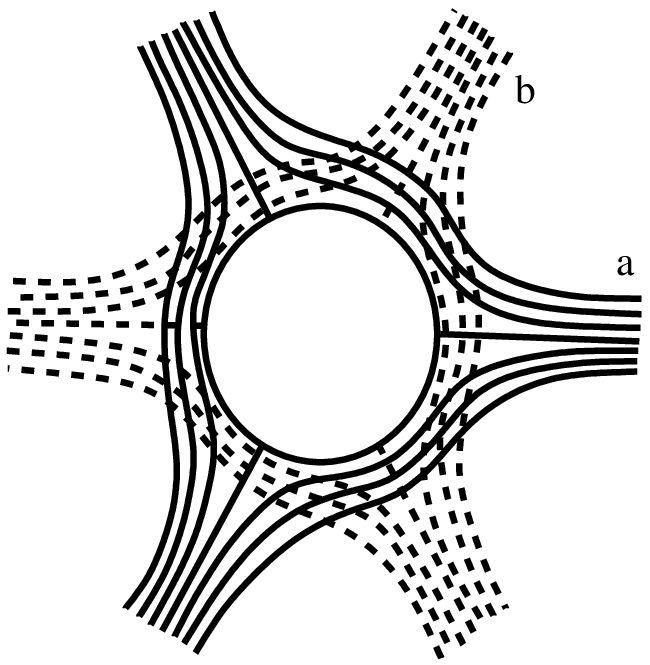}
\centerline{Figure 1}\label{figure_1}
\end{center}

\begin{proposition}[\cite{JG}]\label{JG}
Each reducible map $\phi$ as in case (3) of
Theorem~\ref{NT} can be isotoped into a {\em standard form}; i.e.\:
\begin{enumerate}
\item{the restriction of $\phi$ to each pseudo-Anosov orbit of $F\setminus
N(\Gamma)$ is in standard form as above; and}
\item{the restriction of $\phi$ to each periodic orbit of $F\setminus N(\Gamma)$
is periodic.}
\end{enumerate}
\end{proposition}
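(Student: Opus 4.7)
The strategy is to start from the Nielsen--Thurston representative of Theorem~\ref{NT}, which already supplies a $\phi$-invariant reducing system $\Gamma$ with invariant neighborhood $N(\Gamma)$, and adjust $\phi$ separately on each $\phi$-orbit of components of $F\setminus N(\Gamma)$. Any adjustment will be supported away from $\partial N(\Gamma)$ in the pieces and absorbed by a compensating isotopy in the annular components of $N(\Gamma)$, so the modified map glues back together on all of $F$.

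For a $\phi$-orbit of \emph{periodic} pieces of length $k$, pick a single component $F'$ and realize the first-return map $\phi^k|_{F'}$ by a genuinely periodic homeomorphism. This is possible by Nielsen realization for finite cyclic subgroups of $\MCG(F')$. Propagating this choice around the orbit by $\phi$-translation makes $\phi$ honestly periodic on the whole orbit, giving condition (2) of the proposition. For a $\phi$-orbit of \emph{pseudo-Anosov} pieces the same bootstrapping applies: realize $\phi^k|_{F'}$ by a genuine pseudo-Anosov preserving transverse measured foliations $(\mathfrak{F}_s,\mu_s)$ and $(\mathfrak{F}_u,\mu_u)$ with dilatation $\lambda$, and then propagate around the orbit. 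What remains is to put this pA representative into the prong form of Figure~1 near $\partial F'$ and to arrange that $\phi^k$ restricted to each boundary component is periodic.

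This last boundary adjustment is the main technical step, and is the content of \cite{JG}. The usual Thurston pA model on a surface with boundary has smooth boundary circles around which the invariant foliations spiral; one passes to the prong model by collapsing each boundary spiral annulus onto a singular circle with $p$ prongs, which produces condition (1) of the standard form. After the collapse, $\phi^k$ cyclically permutes the $p$ prongs of each boundary circle and hence already acts on the circle as a rotation up to an isotopy supported in the complementary arcs; the hard part is to realize this isotopy by an ambient isotopy of $\phi^k$ supported in a collar of $\partial F'$ that preserves both transverse measured foliations, so that the interior pseudo-Anosov dynamics are not disturbed. This uses the rigidity of the local pA model near a prong singularity (unique up to scaling by $\pm\lambda$), and once carried out it gives condition (2) of the standard form. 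Propagating by $\phi$ around the orbit and absorbing the collar isotopies into $N(\Gamma)$ completes the construction.
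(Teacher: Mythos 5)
The paper itself offers no proof of this proposition: it is quoted verbatim from Jiang--Guo \cite{JG}, so there is no argument of the authors' to measure yours against. Judged on its own terms, your outline follows the standard route and matches what \cite{JG} actually does: work orbit-by-orbit on the components of $F\setminus N(\Gamma)$, realize the first-return map on one component of each orbit by a genuine periodic (Nielsen realization for finite cyclic groups) or pseudo-Anosov representative, propagate around the orbit by choosing the intermediate homeomorphisms freely and solving for the last one, and then perform the boundary-collar adjustment that puts the foliations into the $p$-pronged form of Figure~1 and makes the boundary restriction periodic, absorbing all corrections into the annuli of $N(\Gamma)$. You correctly identify the collar adjustment as the technical heart and defer it to \cite{JG}, which is exactly the level of detail the paper itself adopts.

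One claim in your last paragraph is stated too strongly and, read literally, is impossible: no nontrivial isotopy supported in a collar of $\partial F'$ can both preserve the two transverse measured foliations (scaling $\mu_u$ by $\lambda$ and $\mu_s$ by $1/\lambda$) \emph{and} render $\phi^k$ periodic on the boundary circle, since a boundary arc with positive transverse measure cannot simultaneously have its measure scaled by $\lambda^{\pm 1}$ and be permuted with finite order. The actual resolution in \cite{JG} is that the measure-scaling property is only retained away from the boundary collar; inside the collar the foliations are extended in the prong pattern and the map preserves them only topologically (it permutes prongs and the sectors between them), which is all that conditions (1) and (2) of the standard form require. This is a fixable imprecision rather than a wrong approach, but as written the sentence asserts something false, and the ``rigidity of the local pA model'' you invoke is what forces the weakening, not what permits the strong statement.
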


This completely fixes the behavior of $\phi$ on the complement of
the regions $N(\Gamma)$. In the sequel we assume that each reducible
map $\phi$ has been isotoped to its standard form in
Proposition~\ref{JG}. Then for any such $\phi$, there is some positive
integer $l$ so that $\phi^l$ is the identity on $\partial
(F\setminus N(\Gamma(\phi)))$ and $\phi$ on $N(\Gamma)$ are Dehn
twists  along each $\gamma\in \Gamma(\phi)$ relative to $\partial
(F\setminus N(\Gamma(\phi)))$.

\begin{definition}
Let $\phi$ be a reducible map. Say $\phi$ is {\em D-type} if it is
generated by Dehn twists along components of $\Gamma(\phi)$; say
$\phi$ is {\em D-type along $\Gamma(\phi)$} if $\phi$ restricts to
the identity along $\partial N(\Gamma(\phi))$.
\end{definition}

\begin{remark} Note that every $\phi$ has a power $\phi^l$ which is
D-type along $\Gamma(\phi)$. Moreover, $\phi$ is a root of D-type,
i.e.\ some power $\phi^l$ is D-type, if and only  if $\phi$ is
periodic on each $\phi$-orbit of $F\setminus N(\Gamma)$.
Alternatively, every  $\phi$ is either a root of D-type or has
pseudo-Anosov $\phi$-orbits.
\end{remark}

Finally we make the following notational convention. We denote surfaces in general
by $F$, $F_i$, $G$ and so on, and use $\Sigma_{g,n}$ to denote the surface of genus $g$
with $n$ boundary components. We sometimes abbreviate $\Sigma_{g,0}$ to
$\Sigma_{g}$.

\subsection{Seifert fibered case}
Finite order automorphisms are very easy to understand.
Suppose $(F_1, \phi_1)$ and $(F_2, \phi_2)$ have finite order, so that
the manifolds $[F_1,\phi_1]$ and $[F_2,\phi_2]$ are Seifert manifolds with a product geometry.
Each $[F_i,\phi_i]$ is finitely covered by a product $F_i \times S^1$.
From Lemma~\ref{surfaces_commensurable} we can deduce:

\begin{proposition}\label{Seifert}\label{seifert_unique_class}
There is exactly one fibered commensurability class of surface bundles of all closed (resp.\ with
torus boundary) Seifert fibered manifolds whose fiber has negative Euler characteristic. This
class contains infinitely many minimal elements.
\end{proposition}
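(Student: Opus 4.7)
For the first assertion, my plan is to exhibit a common fibered cover of any two pairs $[F_1,\phi_1]$ and $[F_2,\phi_2]$ in the family. Each $\phi_i$ is periodic of some order $n_i$, so the $n_i$-fold cyclic cover that unwraps the monodromy realizes $F_i\times S^1$ as a fibered cover of $[F_i,\phi_i]$. By Lemma~\ref{surfaces_commensurable}, $F_1$ and $F_2$ admit a common finite cover $\tilde F$, and forming the product with $\id_{S^1}$ gives a common fibered cover $\tilde F\times S^1$ of $F_1\times S^1$ and $F_2\times S^1$, hence of $[F_1,\phi_1]$ and $[F_2,\phi_2]$ by transitivity of covering.

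For infinitely many minimal elements, my plan is to work in the orbifold category and exhibit infinite families of minimal orbifold fibered pairs of product form $[B_n\times S^1]$, where $B_n$ is a $2$-orbifold satisfying (i) $B_n$ is minimal under orbifold covering, equivalently $\pi_1^{\mathrm{orb}}(B_n)$ is a maximal Fuchsian group of its topological type, and (ii) $B_n$ admits no nontrivial periodic orbifold self-map. In the closed case the natural candidates are hyperbolic $2$-orbifolds of the form $S^2(p_1^{(n)},\ldots,p_{k_n}^{(n)})$ with at least four cone points of pairwise distinct prime orders in asymmetric position, so that the isometry group is trivial and the Fuchsian fundamental group is maximal. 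An entirely parallel construction using $2$-orbifolds with silvered boundary handles the torus-boundary case.

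Given (i) and (ii), the minimality of $[B_n\times S^1]$ follows at once: a fibered cover $[B_n\times S^1]\to[B',\psi']$ restricts on the fiber to an orbifold cover $B_n\to B'$ and, because the monodromy of $B_n\times S^1$ is trivial, forces $(\psi')^d=\id_{B'}$ for the base degree $d$. Property (i) forces $B'=B_n$ with identity fiber cover, and property (ii) then forces $\psi'=\id$, so $[B',\psi']=[B_n\times S^1]$ lies in the same covering equivalence class. Since distinct $n$ give non-isomorphic fiber orbifolds, the pairs $[B_n\times S^1]$ lie in distinct covering equivalence classes, producing infinitely many minimal elements inside the single commensurability class constructed above.

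The step I expect to be the main obstacle is arranging (i) and (ii) together for an infinite family, since $2$-orbifolds with large symmetry groups are abundant and one must rule out both orientation-preserving and orientation-reversing orbifold self-maps while also excluding the possibility that $\pi_1^{\mathrm{orb}}(B_n)$ sits inside a larger Fuchsian group. Classical Fuchsian-group theory together with Margulis arithmeticity (which confines the exceptional cases to a known finite list) guarantees that a generic choice of cone-point data satisfies both conditions, so the remaining work is the explicit bookkeeping needed to write down an infinite list.
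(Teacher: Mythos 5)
Your first paragraph is correct and is exactly the argument the paper intends for the uniqueness of the commensurability class: unwrap the periodic monodromy to exhibit $F_i\times S^1$ as a fibered cover of $[F_i,\phi_i]$, then apply Lemma~\ref{surfaces_commensurable} to get a common cover $\tilde F$ and take $\tilde F\times S^1$. The paper treats this as immediate and devotes its proof entirely to the second assertion.

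For the second assertion there is a genuine gap, and it is a category error before it is a technical one. The proposition is about the class of \emph{surface bundles}, and the minimal elements it asserts are surface automorphisms; your candidates $B_n\times S^1$, with $B_n$ a $2$-orbifold with cone points, are not elements of that class, and minimality in the orbifold category is a strictly different condition from minimality among surface bundles. The two notions genuinely diverge exactly here: the minimal elements the paper constructs, namely $(\Sigma_g,\phi_g)$ with $\phi_g$ periodic of maximal order $4g+2$, are \emph{not} orbifold-minimal, since $[\Sigma_g,\phi_g]$ is a quotient of $\Sigma_g\times S^1$ by the cyclic group generated by $(\phi_g,\text{rotation})$ and hence covers, foliation-preservingly, the orbifold product $S^2(2,2g+1,4g+2)\times S^1$. (This is why Theorem~\ref{minimal_element_orbifold} is explicitly flagged as an ``(orbifold)'' statement while this proposition is not.) So even a completed version of your argument would prove a different statement. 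On top of that, the step you flag as the main obstacle --- producing infinitely many hyperbolic $2$-orbifolds that simultaneously cover no smaller orbifold and admit no nontrivial periodic self-map --- is where all the content of your approach lies, and the tools you name are not the right ones: Margulis's commensurator criterion says nothing about whether an individual Fuchsian group is maximal or asymmetric; one would need Singerman's classification of non-maximal signatures plus a genericity argument, and separately an argument ruling out topological (not merely isometric) periodic self-maps. None of this is carried out.

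The paper's route avoids all of this and stays inside the surface category. The key observation is that if $(\tilde F,\tilde\phi)$ covers $(F,\phi)$ with $\tilde\phi$ primitive, and $\tilde\phi$ has a fixed point near which it acts as a rotation of order $p$, then so does $\phi$. Taking $\phi_g$ of order $4g+2$ on $\Sigma_g$ (which has a unique fixed point, of local rotation order $4g+2$), any $(\Sigma_l,\psi)$ properly covered by $(\Sigma_g,\phi_g)$ would have $l<g$ yet carry a periodic map of order at least $4g+2>4l+2$, which is impossible. This yields infinitely many minimal covering-equivalence classes with no orbifold theory and no maximality or asymmetry analysis. If you wish to salvage your construction as a separate statement about orbifold-minimal elements, you must both supply the existence proof for the family $B_n$ and justify that a fibered covering out of $B_n\times S^1$ restricts to an orbifold covering on generic leaves with periodic induced monodromy.
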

\begin{proof}
All that needs to be proved is that the class contains infinitely
many minimal elements. A key observation is that if
$\tilde{\phi}$ is primitive in $\MCG(\tilde{F})$ and has a fixed
point near which it acts as a rotation through order $p$, the same
is true of any $\phi \in \MCG(F)$ that it covers. This observation
lets us construct infinitely many minimal elements, as follows.

For each genus $g>1$, let $\phi_g$ be a maximum order orientation
preserving periodic map on $\Sigma_g$. Then (see \cite{St}) $\phi_g$
has order $4g+2$ (indeed there is a unique $\Z/(4g+2)\Z$ subgroup of
$\MCG(\Sigma_g)$ up to conjugacy) and has exactly one fixed point,
one periodic orbit of length 2 and one periodic orbit of length
$2g+1$. Clearly $(\Sigma_g, \phi_g)$ is primitive, and $(\Sigma_g,
\phi_g)$ and $(\Sigma_g, \psi)$ cover each other if and only if
$\psi=\phi^q_g$ for $q$ coprime with $4g+2$. Now suppose $(\Sigma_g,
\phi_g)$ covers $(\Sigma_l, \psi)$ with $l\ne g$. Of course, we must
have $l< g$. On the other hand by the observation above, $\psi$ must
have a fixed point near which it acts as a rotation through order
$4g+2$, which implies that $\psi$ is a periodic map on $\Sigma_l$ of
order at least $4g+2$, which is impossible. This completes the
proof.
\end{proof}

\section{Pseudo-Anosov automorphisms}\label{hyperbolic_section}

\subsection{Minimal elements}

The most important fact we prove about commensurability of pseudo-Anosov automorphisms --- equivalently,
of fibered commensurability of hyperbolic fibered pairs --- is the existence of finitely many
minimal elements in each commensurability class. In fact, working in the orbifold category, the
statement is as clean as it could be:

\begin{theorem}\label{minimal_element_orbifold}
Every commensurability class of hyperbolic fibered pairs contains a unique (orbifold) minimal element.
\end{theorem}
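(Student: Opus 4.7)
The plan is to realize the unique minimal element as the quotient of $\H^3$ by a ``fibered commensurator'' group, and to deduce its discreteness from hyperbolic rigidity supplemented by the rigidity of the fibration.

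First, I would pass to a co-orientable finite manifold cover, reducing to the case of a hyperbolic fibered pair $(M,\F) = [F,\phi]$ with $\phi$ pseudo-Anosov and $M = \H^3/\Gamma$ for a torsion-free Kleinian group $\Gamma \subset \isom^+(\H^3)$. Define
\[
C_\F(\Gamma) := \{g \in \isom^+(\H^3) : \Gamma \cap g\Gamma g^{-1} \text{ has finite index in both, and the two projections } \H^3/(\Gamma\cap g\Gamma g^{-1}) \to M \text{ pull } \F \text{ back to isotopic foliations}\}.
\]
This is easily checked to be a subgroup of $\isom^+(\H^3)$ containing $\Gamma$. Granted that $C_\F(\Gamma)$ is discrete and that $\Gamma$ has finite index in it, the quotient $O := \H^3/C_\F(\Gamma)$ is a hyperbolic orbifold finitely covered by $M$, the foliation $\F$ descends to a foliation $\G$ of $O$, and $(O,\G)$ is an orbifold fibered pair.

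Minimality and uniqueness then follow formally. Any orbifold fibered pair $(O',\G')$ covered by $(M,\F)$ has $\pi_1(O') \subset C_\F(\Gamma)$, since every element of $\pi_1(O')$ commensurates $\Gamma$ and visibly preserves the pullback of $\G'$; therefore $(O',\G')$ covers $(O,\G)$, so $(O,\G)$ has no proper fibered quotient and is minimal. For uniqueness, a second minimal orbifold fibered pair $(O'',\G'')$ in the class admits, together with $(O,\G)$, a common fibered cover $(M',\F')$, and the same argument gives $\pi_1(O) = C_{\F'}(\pi_1(M')) = \pi_1(O'')$ after conjugation into $\isom^+(\H^3)$, so $O \cong O''$.

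The central difficulty is the discreteness of $C_\F(\Gamma)$. If $\Gamma$ is non-arithmetic, this is immediate from Margulis's theorem, which gives discreteness of the full commensurator $\text{Comm}(\Gamma) \supset C_\F(\Gamma)$. The genuine obstacle is the arithmetic case, in which $\text{Comm}(\Gamma)$ is dense in $\isom^+(\H^3)$. Here one must exploit the extra rigidity of the fibration: an element of $C_\F(\Gamma)$ must fix the primitive class $\omega \in H^1(M;\Z)$ of $\F$ on every common finite cover, or equivalently must respect the stable and unstable foliations of the pseudo-Anosov suspension flow. I would aim to show, using the invariants developed subsequently in the section --- the dilatation $\lambda$, the singularity data $\Delta$, and the fact that the dilatation spectrum across a commensurability class is bounded away from $1$ --- that the joint stabilizer of these data cannot accumulate at the identity in $\isom^+(\H^3)$, yielding the required discreteness.
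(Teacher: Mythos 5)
Your overall architecture is sound and close in spirit to the paper's: both proofs realize the minimal element as the quotient of a universal cover by the \emph{full} group of symmetries preserving the relevant structure, and then reduce everything to showing that this group is discrete (hence a lattice containing every $\pi_1(M')$ in the class). The minimality and uniqueness deductions you give are essentially the same as the paper's. However, you have left a genuine gap at exactly the point you identify as ``the central difficulty'': discreteness of $C_\F(\Gamma)$ when $\Gamma$ is arithmetic. Your proposed remedy --- using the dilatation $\lambda$, the singularity data $\Delta$, and the spectrum --- does not plausibly close it. Those are invariants that distinguish one commensurability class from another; they are constant across the single class under consideration and give no control on whether a sequence of elements of $C_\F(\Gamma)$ can accumulate at the identity in $\isom^+(\H^3)$. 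You would need an argument showing that a non-discrete (hence, by Zariski density of $\Gamma$, dense) subgroup of $\isom^+(\H^3)$ cannot preserve the fibration data; nothing in your sketch supplies this.

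The paper avoids the arithmetic/non-arithmetic dichotomy altogether by abandoning the hyperbolic metric in favor of the canonical singular $\Sol$ metric on the interior of $M$ determined by the pseudo-Anosov suspension flow, in which the leaves are singular Euclidean surfaces. Every covering of fibered pairs is isotopic to an isometric covering in these metrics, so one may take $\Lambda$ to be the full isometry group of the singular $\Sol$ metric on $\tilde M$; this group automatically preserves the foliation (point stabilizers in $\isom(\Sol)$ preserve the foliation by Euclidean planes), and discreteness is then a direct computation: a continuous family of isometries would have to translate along the singular flowlines, but the leafwise distance between two singular flowlines varies like $\sqrt{e^{2t}x^2+e^{-2t}y^2}$ and so is not locally constant, a contradiction. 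If you want to stay in the hyperbolic picture, the viable completion (noted as a remark in the paper, following Agol) is to use the harmonic $1$-form representing the fibration class: its pullback to $\H^3$ is preserved projectively by $C_\F(\Gamma)$, and one argues that the projective symmetry group of this form is discrete. Either way, some structure canonically attached to the \emph{fibration} (not just to the class of the automorphism) must be shown to have discrete symmetry group; your current sketch does not do this.
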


\begin{remark}\label{non-unique}
If $M$ is not arithmetic, then the commensurability class of $M$ (in
the usual sense) contains a unique minimal element which is some
orbifold $O$. However, if $M$ is arithmetic, no such unique minimal
element exists, and the commensurator of $\pi_1(M)$ is dense in
$\PSL(2,\C)$ (see \cite{Borel, Margulis}).
\end{remark}

\begin{remark}
Compare with Proposition~\ref{seifert_unique_class} to see that the hypothesis of ``hyperbolic'' is essential here
(in fact, the hyperbolic world is essentially the only context in which there are unique minimal
elements in a commensurability class).
\end{remark}

We now give the proof of Theorem~\ref{minimal_element_orbifold}.
\begin{proof}
Let $(M,\F)$ be a fibered pair, and after passing to a $2$-fold cover if necessary, assume that
$M$ fibers over $S^1$ with fibers the leaves of $\F$. Thus $M$ has the structure of an $F$-bundle
over $S^1$ with monodromy $\phi$, for some compact surface $F$, and some pseudo-Anosov
homeomorphism $\phi:F \to F$. The suspension of the product structure gives a pseudo-Anosov
flow $X$ transverse to $\F$, with finitely many closed singular orbits corresponding to the
singular points of $\phi$. The interior of the manifold $M$ admits a unique complete
singular Sol metric for which the
leaves of $\F$ are Euclidean surfaces with cone singularities on the singular orbits of $X$; see
e.g.\ \cite{Th2} or \cite{FLP} for details.

Pulling back the singular Sol metric on $M$
gives the interior of the universal cover $(\tilde{M},\tilde{\F})$ the structure of a complete
simply-connected singular Sol manifold, for which the leaves of $\tilde{\F}$ are singular
Euclidean planes, and on which $\pi_1(M)$ acts as a discrete finite covolume group of
isometries. Let $\Lambda$ denote the full group of isometries of $\tilde{M}$ with its singular
Sol metric.
\smallskip

{\noindent \bf Claim:} {\em $\Lambda$ is itself a lattice, and it preserves the foliation $\tilde{\F}$}.

\smallskip

We show how the theorem follows from this Claim. Since
$\pi_1(M)\subset \Lambda$ we have the foliation preserving covering
$p: (M, \F)=(\tilde{M},\tilde{\F})/\pi_1(M)\to
(\tilde{M},\tilde{\F})/\Lambda.$ Since $(M, \F)$ is a hyperbolic
surface bundle of finite volume, we conclude that
$(\tilde{M},\tilde{\F})/\Lambda$ is an orbifold fiber pair  $(O,\G)$.
Notice that any covering map of fibered pairs
$(\tilde{M},\tilde{\F}) \to (M,\F)$ is isotopic to an isometric
covering of the interiors in the singular Sol metrics. Then it is
easy to see that for any pair $(M', \F')$ commensurable with  $(M,
\F)$ the group $\pi_1(M')$ embeds into $\Lambda$ in such a way that
$(M', \F')$ covers $(O,\G)$.

\smallskip

Now we prove the Claim. First, it is evident that $\Lambda$
preserves the stratification of $\tilde{M}$ into ``ordinary'' points
(those with a neighborhood isometric to an open set in Sol) and
singular points (those on the lifts of the singular flowlines of
$X$). Moreover, any isometry between open subsets of Sol must
preserve the foliation by Euclidean planes, as can be seen by
appealing to the well-known structure of the point stabilizers in
$\isom(\Sol)$; see e.g.\ \cite[Chap. 3]{Th2}.

Since $\Lambda$ is equal to the group of isometries of the nonsingular part of $\tilde{M}$, it
follows that $\Lambda$ is a Lie group, by the well-known theorem of Myers-Steenrod \cite{MS}.
Hence if $\Lambda$ is not discrete, it must contain a continuous
family of nontrivial isometries. Such isometries can only act on the singular flowlines as
translations. Let $\ell(t)$ and $\ell'(t)$ be two such flowlines, parameterized by length in such
a way that $\ell(t)$ and $\ell'(t)$ are contained in the same singular Euclidean leaf of $\tilde{M}$,
for each $t$. Assume furthermore that for $|t|$ sufficiently small, the points $\ell(t)$ and $\ell'(t)$
can be joined by a unique (nonsingular) Euclidean geodesic in the singular Euclidean leaf containing them.
Then for small $t$, the length of this Euclidean geodesic as a function of $t$ has the form
$\sqrt{e^{2t}x^2 + e^{-2t}y^2}$ for fixed $x$ and $y$; in particular, the length of this Euclidean
geodesic is not locally constant, and therefore (since elements of $\Lambda$ preserve the foliation
by singular Euclidean planes) a continuous family of isometries must fix $\ell$ and $\ell'$ pointwise.
But this implies that $\tilde{M}$ admits no continuous family of nontrivial isometries,
and $\Lambda$ is discrete. Since it contains $\pi_1(M)$, it is therefore a lattice,
as claimed.
\end{proof}

\begin{remark}
If $F$ is closed, $\tilde{M}$ with its singular Sol metric and with its hyperbolic metric are quasi-isometric.
Consequently if $\ell,\ell'$ are two flowlines, the distance function $d(\cdot,\cdot)$ is proper on
$\ell \times \ell'$ and therefore one obtains another proof that $\Lambda$ contains no nontrivial continuous family.
\end{remark}

\begin{remark}
A fibration of $M$ over a circle is determined by an element of
$H^1(M;\Z)$, which is represented by a unique harmonic $1$-form
$\alpha$ in the hyperbolic metric on $M$. A cover
$(\tilde{M},\tilde{\F}) \to (M,\F)$ pulls back the harmonic $1$-form
on $M$ to the corresponding harmonic $1$-form on $\tilde{M}$ (up to
scale), so one can give a slightly different proof of
Theorem~\ref{minimal_element_orbifold} by using the pullback of this
$1$-form to $\H^3$ and arguing that its set of (projective)
symmetries is discrete. Compare with the proof of Theorem~0.1 in
\cite{Agol}.
\end{remark}

The following two corollaries are immediate:

\begin{corollary}\label{volume_bound}
For any positive constant $C$, the set of hyperbolic fibered pairs in a commensurability class whose
underlying $3$-manifold has volume bounded above by $C$ contains only finitely many elements.
\end{corollary}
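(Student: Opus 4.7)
The plan is to leverage the unique minimal orbifold produced by Theorem~\ref{minimal_element_orbifold}. Fix any representative $(M_0,\F_0)$ of the commensurability class, and let $(O,\G) = (\tilde M,\tilde\F)/\Lambda$ be its minimal element, where $\Lambda$ is the lattice constructed in the preceding proof. By that proof, every hyperbolic fibered pair $(M,\F)$ in the class arises as a quotient $(\tilde M,\tilde\F)/\Gamma$ for some finite-index subgroup $\Gamma \le \Lambda$, with foliation the pullback of $\G$; moreover conjugate subgroups yield isomorphic fibered pairs.

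The step that converts the geometric hypothesis into a combinatorial one is multiplicativity of volume under finite covers: $\vol(M) = [\Lambda:\Gamma]\cdot\vol(O)$. Since $\vol(O) > 0$, the assumption $\vol(M)\le C$ forces the uniform index bound $[\Lambda:\Gamma] \le C/\vol(O)$. Any lattice in a Lie group is finitely generated, and a classical theorem of M.~Hall says that a finitely generated group contains only finitely many subgroups of each given finite index, a fortiori only finitely many conjugacy classes of such subgroups. Therefore $\Lambda$ admits only finitely many conjugacy classes of subgroups of index at most $C/\vol(O)$, and this bounds the number of isomorphism classes of fibered pairs in the commensurability class of volume at most $C$.

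There is no real obstacle here; the corollary is essentially a repackaging of Theorem~\ref{minimal_element_orbifold} with standard subgroup finiteness. The only point to verify carefully is that the correspondence from conjugacy classes of finite-index subgroups of $\Lambda$ to isomorphism classes of fibered pairs in the class is one-to-one (or at worst finite-to-one), which is immediate from covering space theory in the foliation-preserving category, using that the foliation on each cover is recovered as the pullback of $\G$.
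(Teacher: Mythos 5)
Your argument is correct and follows the same route as the paper: every pair in the class covers the unique minimal orbifold $(O,\G)$ from Theorem~\ref{minimal_element_orbifold}, the volume bound forces the index of the corresponding subgroup of $\pi_1(O)$ to be at most $C/\vol(O)$, and finite generation of $\pi_1(O)$ gives only finitely many subgroups of bounded index. Your extra remarks (Hall's theorem, the finite-to-one correspondence with isomorphism classes) only make explicit what the paper leaves implicit.
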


\begin{proof}
Such a pair corresponds to a finite index subgroup of the orbifold fundamental group of $(O,\G)$
(with notation as in Theorem~\ref{minimal_element_orbifold}) where the index is bounded by
$C/\vol(O)$. Since $\pi_1(O)$ is finitely generated, the number of such subgroups is bounded.
\end{proof}

\begin{corollary}\label{hyperbolic_different_fibrations}
Suppose $M$ is hyperbolic and fibers over $S^1$, and
$\rank(H_1(M))>1$. Then $M$ fibers over $S^1$ in ways representing
infinitely many fibered commensurability classes.
\end{corollary}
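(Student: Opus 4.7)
The plan is to combine Thurston's theorem on the fibered face of the Thurston norm ball with Corollary~\ref{volume_bound}. The idea is simple: under the rank hypothesis, $M$ admits infinitely many distinct fibrations, but these all give fibered pairs with the same underlying manifold (hence the same volume), so only finitely many of them can lie in any single commensurability class.

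First I would invoke Thurston's norm theory. If $M$ is hyperbolic and fibers over $S^1$, the cohomology class $[\alpha]\in H^1(M;\R)$ dual to the fibration lies in the open cone over a top-dimensional face $\Delta$ of the Thurston norm ball, and every primitive integral class in this cone is itself represented by a fibration of $M$ over $S^1$. Since $\rank(H_1(M))>1$ by hypothesis, we have $\dim H^1(M;\R)\geq 2$, so the open cone over $\Delta$ contains infinitely many primitive integer classes. These yield infinitely many distinct fibrations $\F_1,\F_2,\ldots$ of $M$, giving infinitely many fibered pairs $(M,\F_i)$.

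Second, I would apply Corollary~\ref{volume_bound}. All the fibered pairs $(M,\F_i)$ have the same underlying manifold $M$, and hence the same hyperbolic volume $V:=\vol(M)<\infty$. By Corollary~\ref{volume_bound}, a single fibered commensurability class can contain only finitely many hyperbolic fibered pairs whose underlying $3$-manifold has volume at most $V$. Hence each commensurability class absorbs only finitely many of the $(M,\F_i)$, and the infinite collection $\{(M,\F_i)\}_i$ must be partitioned into infinitely many fibered commensurability classes.

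The main work in this argument is really Thurston's fibered face theorem, which I would cite rather than reprove. Given that tool, the rest is a straightforward counting argument using the volume bound provided by the existence of a unique minimal orbifold in each commensurability class. No separate obstacle arises from the distinction between fibrations and fibered pairs: distinct fibrations of $M$ tautologically give rise to non-isotopic foliations $\F_i$, and therefore to genuinely distinct fibered pairs $(M,\F_i)$ in the sense of Definition~\ref{fibered_commensurable}.
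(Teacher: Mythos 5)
Your argument is correct and is exactly the reasoning the paper intends: the authors state this corollary without proof (calling it ``immediate''), and the implicit argument is precisely Thurston's fibered face theorem producing infinitely many distinct fibrations of the fixed manifold $M$, combined with Corollary~\ref{volume_bound} applied with $C=\vol(M)$ to show each commensurability class absorbs only finitely many of them. No gaps; the only ingredient you rightly cite rather than reprove is the fibered face theorem.
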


\begin{example}
Suppose $(F,\phi)$ is pseudo-Anosov. Let $c$ be an essential simple
closed curve on $F$, and let $\tau_c$ be a Dehn twist along $c$.
Then the automorphisms $(F,\tau_c^l\circ\phi)$ are hyperbolic for
all large $l$, while the volumes of $[F, \tau_c^l\circ\phi]$ are all
bounded by the volume of the cusped manifold $[F,\phi]\setminus
(c\times\{0\})$. By Corollary~\ref{volume_bound},
there are infinitely many commensurability classes among the $(F,
\tau_c^l\circ\phi)$ for large $l$. Of course, it is easy to see
directly in this case that the underlying manifolds fall into
infinitely many commensurability classes (in the usual sense); see
e.g.\ \cite{Anderson}. We give more substantial examples of incommensurable
pseudo-Anosov automorphisms in the next subsection and after.
\end{example}

\begin{remark} One trivial way to produce a hyperbolic $3$-manifold $M$ with many
non-isotopic but commensurable fibrations is just to choose a
$3$-manifold with a large isometry group. We do not know explicit
examples of two commensurable fibrations of a single hyperbolic
$3$-manifold with different genus.
\end{remark}

\subsection{Commensurability invariants}

The following is an incomplete list of elementary commensurability invariants for
pseudo-Anosov automorphisms:

\begin{enumerate}
\item{whether the underlying surface is closed or bounded;}
\item{the commensurability class of the underlying $3$-manifold of $[F,\phi]$.}
\item{the commensurability class of $\log(K)$ where $K$ is the dilatation;}
\item{the set of orders of the singular points of the invariant foliations;}
\end{enumerate}

For later use we say a few words about (3) and (4). First we
make some definitions. For a pseudo-Anosov automorphism
$(F,\phi)$ with a pair of transversely measured singular foliations
$\mathfrak{F}_{s, u}$, we use $\lambda(\phi)>1$ to denote the
dilatation of $\phi$, and $\delta_n(\phi)$ to denote the number of
singularities of degree $n$, then define $\Delta(\phi)$ to be the (infinite)
vector whose coordinates are the $\delta_n(\phi)$.

The first observation to make is that for pseudo-Anosov
automorphisms, $\lambda(*)$ is only affected by dynamical coverings,
and $\Delta(*)$ is only affected by topological coverings.

\begin{lemma}\label{lambda-Delta}
Suppose $(F_1,\phi_1)$, $(F_2,\phi_2)$ are two commensurable pseudo-Anosov maps.
Then  for some $s, s'\in \Q_+$,
\begin{enumerate}
\item{$\log\lambda(\phi_1)= s\log\lambda(\phi_2)$, and
moreover  $\log\lambda(\phi_1)=\log\lambda(\phi_2)$ if
they are topologically commensurable; and}
\item{$\Delta(\phi_1)= s'\Delta(\phi_2),$ and moreover
$\Delta(\phi_1)=\Delta(\phi_2)$ if they are dynamically
commensurable.}
\end{enumerate}
\end{lemma}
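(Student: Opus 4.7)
The plan is to reduce the lemma to understanding how $\lambda$ and $\Delta$ transform under two elementary operations: lifting to a finite cover and taking powers. By the definition of commensurability, we have a common cover $(\tilde F, \tilde \phi_i)\to (F_i,\phi_i)$ for $i=1,2$ with $\tilde\phi_1^{k_1}=\tilde\phi_2^{k_2}$, so it suffices to track both invariants along each arrow and equate at $\tilde F$.

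First I would observe that if $(\tilde F,\tilde\phi)$ covers $(F,\phi)$ via $\pi:\tilde F\to F$ of degree $d$, then the pair of invariant measured foliations $(\mathfrak F_{s,u},\mu_{s,u})$ of $\phi$ pulls back (as a singular measured foliation) to the invariant pair of $\tilde\phi$, by uniqueness of these foliations up to isotopy. Since the transverse measures pull back and $\tilde\phi\circ\pi=\pi\circ\phi$, the stretching factors agree, so $\lambda(\tilde\phi)=\lambda(\phi)$. For singularities, the cover $\pi$ is unbranched (finite covering of surfaces), so each $n$-pronged singularity of $F$ has exactly $d$ preimages in $\tilde F$, each still $n$-pronged; hence $\Delta(\tilde\phi)=d\cdot\Delta(\phi)$.

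Next I would record the effect of powers: $\phi^k$ has the same invariant foliations as $\phi$ (up to swapping $\mathfrak F_s\leftrightarrow\mathfrak F_u$ when $k<0$), with the same singularity data, so $\Delta(\phi^k)=\Delta(\phi)$, while the dilatation satisfies $\lambda(\phi^k)=\lambda(\phi)^{|k|}$, i.e.\ $\log\lambda(\phi^k)=|k|\log\lambda(\phi)$. Combining these facts with $\tilde\phi_1^{k_1}=\tilde\phi_2^{k_2}$ yields
\[
|k_1|\log\lambda(\phi_1)=\log\lambda(\tilde\phi_1^{k_1})=\log\lambda(\tilde\phi_2^{k_2})=|k_2|\log\lambda(\phi_2),
\]
and, writing $d_i$ for the degree of $\tilde F\to F_i$,
\[
d_1\Delta(\phi_1)=\Delta(\tilde\phi_1)=\Delta(\tilde\phi_1^{k_1})=\Delta(\tilde\phi_2^{k_2})=\Delta(\tilde\phi_2)=d_2\Delta(\phi_2).
\]
Thus $s=|k_2|/|k_1|$ and $s'=d_2/d_1$ lie in $\Q_+$. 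In the topologically commensurable case $|k_1|=|k_2|=1$, giving $\log\lambda(\phi_1)=\log\lambda(\phi_2)$; in the dynamically commensurable case $\tilde F=F_1=F_2$ forces $d_1=d_2=1$, giving $\Delta(\phi_1)=\Delta(\phi_2)$.

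The main delicate point is the invariance claim in the first step: one has to know that the Thurston invariants (foliations, measures, and hence $\lambda$ and $\Delta$) really are well-defined invariants of the isotopy class $\phi$ and pull back cleanly under a finite cover intertwining $\phi$ and $\tilde\phi$. This is where the standard form of Theorem~\ref{NT} and Proposition~\ref{JG} does the work, together with the uniqueness (up to isotopy and rescaling of measures) of the invariant foliations for a pseudo-Anosov representative. Once that is stated, the remainder is a bookkeeping exercise along the commuting square defining commensurability.
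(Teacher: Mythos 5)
Your proof is correct and is essentially the paper's argument written out in full: the paper simply asserts that the lemma ``follows immediately from the definitions'' (deferring part (1) to the computation $k_1\log\lambda(\phi_1)=\log\lambda(\tilde\phi_1^{k_1})=\log\lambda(\tilde\phi_2^{k_2})=k_2\log\lambda(\phi_2)$ in the proof of Proposition~\ref{Pi-lambda}), and your bookkeeping of how $\lambda$ and $\Delta$ transform under finite covers and under powers is exactly the content being invoked. No discrepancy with the paper's approach.
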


\begin{proof}
These facts follow immediately from the definitions (recall
Definition~\ref{topologically_dynamically_definition}; also, (1) 
follows from the proof of Proposition~\ref{Pi-lambda}).
\end{proof}

\begin{example}[Bounded--unbounded]
In \cite{Hironaka}, Remark~4.3, Hironaka gives an example of a pair of automorphisms $\phi_{(1,3)}$
defined on a genus $2$ surface with four boundary components, and $\phi_{(3,4)}$ defined
on a closed genus $3$ surface with the same dilatation. The commensurability
classes of these examples are also distinguished by the orders of the singular points.
\end{example}

\begin{example}\label{3}
Explicit examples of incommensurable fibrations of the same
hyperbolic $3$-manifold are straightforward to construct and
distinguish by Lemma~\ref{lambda-Delta}. For example, in page 4
of \cite{Hironaka}, fibrations of the complement of the link $6_2^2$
in Rolfsen's tables \cite{Rolfsen} are listed, and their singularity
sets do not satisfy the commensurability condition in bullet (2) of
Lemma~\ref{lambda-Delta}.
\end{example}

\begin{example}\label{Different singular orders}
Incommensurable examples may be obtained by branched covers. Start
with an Anosov automorphism $\phi$ of a torus $T$ with dilatation
$K$, and let $P$ be a finite subset of $T$ permuted by $\phi$. Let
$F$ be obtained as a branched cover of $T$, branched over $P$. Then
some power of $\phi$ lifts to an automorphism of $F$ with dilatation
a power of $K$. Different choices of branch orders give rise to
incommensurable automorphisms of closed surfaces with the same
dilatations, but usually incommensurable singular sets.
\end{example}

One may define a more subtle invariant of commensurability as
follows. Let $\phi$ be a pseudo-Anosov automorphism of $F$, with
measured foliations $\mathfrak{F}_{s,u}$ and projectively invariant
transverse measures $\mu_{s,u}$, and singular set $S$ (note that $S$ is finite).
For any pair of points $p$ and $q$
(possibly $p=q$) in the singular set, and any homotopy class of paths
$\gamma$ from $p$ to $q$ in the complement $F\setminus S$ we define a
number $\ell(\gamma)$ to be the infimum, over all paths $\gamma'$
from $p$ to $q$ which are homotopic to $\gamma$ in $F\setminus S$ rel.\
endpoints, of the product
$$\ell(\gamma) = \inf_{\gamma'} \mu_s(\gamma')\mu_u(\gamma')$$
This number depends on the choice of measures $\mu_s,\mu_u$ in their projective class, but
is well-defined if we normalize the product of measures so that $\int_F d\mu_sd\mu_u = -\chi(F)$.

\begin{definition}
Define the {\em spectrum} of $(F,\phi)$ to be the set of numbers $\ell(\gamma)$ as
$\gamma$ varies over nontrivial homotopy classes of paths in $F\setminus S$ as above.
\end{definition}

\begin{proposition}\label{spectrum_bounded_away_from_zero}
With the normalization of the product of measures as above, the spectrum is a commensurability
invariant. Furthermore, it is strictly positive, and discrete as a subset of $\R$ (and is therefore bounded
away from zero).
\end{proposition}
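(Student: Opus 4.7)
The plan is to express $\ell(\gamma)$ in terms of the geodesic representative of $\gamma$ in the singular flat metric $d\mu_s^2+d\mu_u^2$ on $F$, and then read off invariance, positivity, and discreteness from three standard inputs: pullback of the pseudo-Anosov data under covers, minimality of the invariant foliations, and discreteness of the saddle connection holonomy set.

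For invariance, let $(\tilde F,\tilde\phi_1,\tilde\phi_2)$ realize the commensurability of $(F_1,\phi_1)$ and $(F_2,\phi_2)$. The invariant foliations and projective measures on $\tilde F$ are determined by the common power $\tilde\phi_1^{k_1}=\tilde\phi_2^{k_2}$, so the spectra of $(\tilde F,\tilde\phi_i)$ for $i=1,2$ agree. Under a cover $\pi:\tilde F\to F_i$, both $\int d\mu_sd\mu_u$ and $-\chi$ scale by $\deg(\pi)$, so pulled-back measures remain normalized. Transverse variation is preserved pointwise by $\pi$, and projection/lifting identifies $\ell$-values: each class $[\tilde\gamma]$ in $\tilde F\setminus\tilde S$ satisfies $\ell(\tilde\gamma)=\ell(\pi\tilde\gamma)$, and every class in $F_i\setminus S_i$ arises this way. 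Hence the spectrum is a commensurability invariant.

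For positivity, each homotopy class has a geodesic representative $\gamma^*$ in the singular flat metric, decomposing as a concatenation of saddle connections meeting at singularities. Developing to the universal cover, where $(\mu_u,\mu_s)$ give global coordinates, the fact that total variation dominates net displacement yields $\mu_s(\gamma')\ge\mu_s(\gamma^*)$ and $\mu_u(\gamma')\ge\mu_u(\gamma^*)$ for every representative $\gamma'$, with equality attained in the limit by perturbing $\gamma^*$ slightly off its interior singularities. Thus $\ell(\gamma)=\mu_s(\gamma^*)\mu_u(\gamma^*)$. The decisive input, and the main obstacle in the argument, is that the pseudo-Anosov invariant foliations are minimal and so contain no saddle connections of their own: without this dynamical input, classes represented by leaf segments between singular points would contribute vanishing $\ell$. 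Granting minimality, every constituent saddle connection of $\gamma^*$ has strictly positive holonomy in both coordinates, forcing $\ell(\gamma)>0$.

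For discreteness, the set $H\subset\R^2$ of saddle connection holonomies is discrete and $\phi$-invariant, with $\phi$ acting by $(x,y)\mapsto(\lambda x,y/\lambda)$ and preserving $xy$. The intersection of a fundamental domain for this $\Z$-action with the sublevel set $\{|xy|\le L\}$ is a bounded region in $\R^2$, hence meets $H$ in finitely many points; combined with the absence of axis-aligned saddle connections, this gives a positive lower bound $m>0$ on all products $|xy|$ for $(x,y)\in H$. From the estimate $\mu_s(\gamma^*)\mu_u(\gamma^*)\ge\sum_i|\Delta x_i||\Delta y_i|\ge km$, where $k$ is the number of constituent saddle connections of $\gamma^*$, the bound $\ell(\gamma)\le L$ forces $k\le L/m$ and confines each constituent to one of finitely many $\phi$-orbits. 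Only finitely many $\phi$-orbit classes of such concatenations arise, yielding finitely many possible $\ell$-values in $(0,L]$ and hence discreteness of the spectrum.
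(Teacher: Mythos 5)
Your argument is correct in outline but follows a genuinely different route from the paper's. The invariance step is the same in both (normalization is multiplicative under covers, and relative homotopy classes of arcs between singular points correspond bijectively under covering with equal $\ell$-values). For positivity and discreteness, however, you work with the flat geometry of the invariant foliations: you realize $\ell(\gamma)$ on a flat geodesic decomposed into saddle connections, get positivity from the absence of saddle connections in the invariant foliations of a pseudo-Anosov, and get discreteness from the discreteness and $\phi$-invariance of the holonomy set. The paper instead argues purely dynamically: $\ell$ is $\phi$-invariant, and applying a suitable power of $\phi$ ``balances'' any representative so that $|\log\mu_s-\log\mu_u|\le\log K$; a bound on the product then bounds both factors, and only finitely many relative homotopy classes admit representatives with both measures bounded. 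Your version buys a more explicit account of positivity (the paper's ``each of them has a positive $\ell$ length'' silently uses exactly the no-saddle-connection input you isolate), at the cost of importing standard but nontrivial facts about quasi-transversality of flat geodesics (needed to justify $\ell(\gamma)=\mu_s(\gamma^*)\mu_u(\gamma^*)$) and finiteness of saddle connections of bounded holonomy.

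One step is glossed and needs repair: from ``each constituent lies in one of finitely many $\phi$-orbits'' you cannot immediately conclude ``only finitely many $\phi$-orbit classes of concatenations,'' since a priori a concatenation could involve $\sigma_1$ followed by $\phi^{n}(\sigma_2)$ for unbounded $n$, with each piece in a fixed orbit. What rules this out is the cross terms: $L\ge\bigl(\sum_i|\Delta y_i|\bigr)\bigl(\sum_j|\Delta x_j|\bigr)\ge|\Delta y_i||\Delta x_j|$ for all pairs $i\ne j$. After applying a single power of $\phi$ to the whole curve so that $1\le|\Delta x_1|<\lambda$, these cross bounds together with $|\Delta x_i\Delta y_i|\ge m$ confine every constituent's holonomy to a fixed bounded region, and finiteness follows. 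Note that this normalization is precisely the paper's balancing trick resurfacing; you should make it explicit rather than normalizing each constituent separately.
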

\begin{proof}
By multiplicativity of Euler characteristic, the normalization of
the product of measures is compatible under finite covers. Each
homotopy class of arcs joining singular points on $F$ lifts to an arc
joining singular points in any cover $\tilde{F}$, so the spectrum as
defined is a commensurability invariant.

It remains to show that the spectrum is discrete. By the properties of a pseudo-Anosov, we have
$\ell(\gamma) = \ell(\phi^i(\gamma))$ for any homotopy class $\gamma$ and any integer $i$.
To show that the spectrum is discrete, it suffices to show that there are
only finitely many $\phi$-orbits of homotopy classes $\gamma$ with $\ell(\gamma)\le C$.

Suppose  $K>1$ is the dilatation of $\phi$, and $\gamma'$ is any path between singular
points on $F$. By the definition of $\mathfrak{F}_{s,u}$, we have
$\mu_s(\phi(\gamma')) = K\mu_s(\gamma')$ and $\mu_u(\phi(\gamma')) =
K^{-1}\mu_u(\gamma')$. So under the automorphism $\phi$, the
difference of their logs changes by $2\log K$. It follows that
whatever the difference of logs is initially, after a suitable power
of $\phi$ the absolute value of the difference can be taken to be at most $\log(K)$.
In other words, there is some integer $i$ so that
$$|\log(\mu_s(\phi^i(\gamma'))) - \log(\mu_u(\phi^i(\gamma')))| \le \log(K).$$

If $A$ and $B$ are positive numbers, then a bound on $AB$ and a bound on
$|\log(A) - \log(B)|$ lets us bound both $A$ and $B$. It follows that if $\ell(\gamma) \le C$
then for some $i$, the homotopy class $\phi^i(\gamma)$ is represented by an
arc $\beta=\phi^i(\gamma')$ for which both $\mu_s(\beta)$ and $\mu_u(\beta)$ are bounded, by a constant
depending only on $C$ and $K$. By the discreteness of $S$, there are only finitely many
such relative homotopy classes $\phi^i(\gamma)$, and each of them has a positive $\ell$
length. So $\ell(\gamma)$ takes only finitely many values in $[0,C]$ (all of them positive).
\end{proof}

\begin{remark}
If $\Sigma$ is a Riemann surface, any quadratic holomorphic differential $\alpha$ on $\Sigma$
defines a pair of singular measured foliations, and we can define a spectrum as above for
a pair $(\Sigma,\alpha)$. Multiplying $\alpha$ by a constant also multiplies the spectrum
by a constant, so we can normalize to quadratic differentials with $\int_\Sigma |\alpha|=1$.
The set of such pairs $(\Sigma,\alpha)$ can be identified with the unit cotangent bundle
in moduli space. The spectrum (defined as above) is constant
on orbits of the Teichm\"uller flow (see e.g.\ \cite{Masur_Tabachnikov} for a definition),
and is discrete (by Proposition~\ref{spectrum_bounded_away_from_zero}) for points on
closed orbits of the flow. For general quadratic differentials the spectrum can have
accumulation points, or its closure can contain a perfect set, or it can even be dense.
\end{remark}

This invariant gives rise to a new way to distinguish commensurability classes of automorphisms.

\begin{example}[Different spectrum]
As above, let $\phi$ be an Anosov automorphism of a torus $T$ (with
a flat metric on the torus of total area $1$). The set of periodic
points is dense, so we can choose two periodic points $O, P$.
The stable and unstable foliations of $\phi$ give coordinates
on $T$, at least in a neighborhood of $O$, so that $O=(0,0)$ and
$P=(x,y)$.

In a suitable cover of $T$ branched over $O$ and $P$ we obtain an automorphism with
dilatation a power of $K$ for which the smallest term in the
spectrum is at most $|xy|$ times a constant depending only on the
combinatorics of the cover. By choosing the periodic point $P$ so that $|xy|$ is
sufficiently small, we can ensure that the first term in the spectrum is
as close to $0$ as we desire, while at the same time fixing the
orders of the singular points. By
Proposition~\ref{spectrum_bounded_away_from_zero}, this construction
gives rise to infinitely many commensurability classes with
commensurable log dilatation and the same combinatorial invariants.
\end{example}

\begin{remark}
Example~\ref{Different singular orders} also produces examples of
infinitely many (incommensurable) pseudo-Anosov maps with different
singular orders but the same spectrum. It is not clear if there exists a pair
of pseudo-Anosov maps with incommensurable log dilatations but the same spectrum.
\end{remark}

\section{Reducible automorphisms}\label{reducible_section}

\subsection{Commensurability invariants of reducible automorphisms}
We have assumed that each reducible map is in its standard form as
described in Proposition~\ref{JG}. We also use the notation from
that proposition without comment.

Let $A$ be an oriented annulus $A$. The mapping class group of $A$
rel.\ boundary is isomorphic to $\Z$, generated by a positive Dehn
twist $\tau$ along the core circle. We denote the $n$th power of
such a Dehn twist by $\tau_n$. Figure 2 shows $n=1$ and $-2$ respectively.

\begin{remark}
In Figure~2 and the figures thereafter, the orientation of the surface is indicated by
a ``cup'' shaped arrow, and the numbered circles on the surface indicate the power of a
positive Dehn twist (with respect to the given orientation).
\end{remark}

\begin{center}
\psfrag{a}[]{$1$} \psfrag{b}[]{$-2$}
\includegraphics[width=3in]{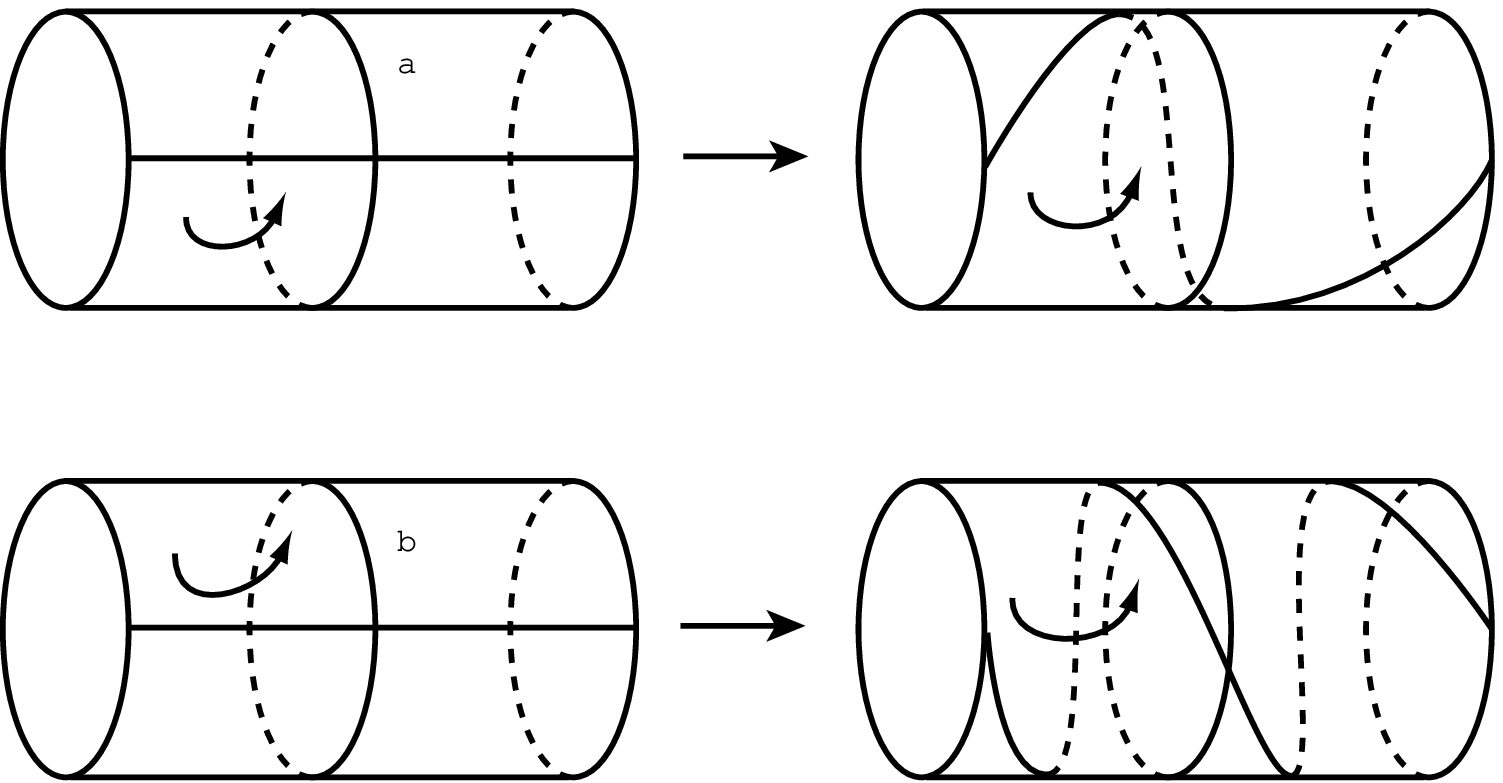}
\vskip 0.5 truecm \centerline{Figure 2}
\end{center}

For a reducible map $\phi$, choose $l$ so that $\phi^l$ is the
identity on $\partial (F\setminus N(\Gamma(\phi)))$. For each
component $N(\gamma)$ of $N( \Gamma(\phi))$, where $\gamma\in
\Gamma(\phi)$, $N(\gamma)$ has the induced orientation and
$\phi^l|\partial N(\gamma)$ is the identity. Then the restriction of
$\phi^l$ to $N(\gamma)$ is the $n$th power of a Dehn twist for some
integer $n$. Now define
$$I(\phi^l,\gamma);\,\,\,I(\phi,\gamma)=I(\phi^l,\gamma)/l;\,\,\,a_k(\phi)=\# \{\gamma\in \Gamma(\phi)|\
I(\phi, \gamma)=k \}, \ k \in \Q$$
Further, define
$$S(\phi)=\{S\ |\ S \,\text { a component of}\, F\setminus
N(\Gamma(\phi))\}$$ and
$$\Omega(S)=\{\gamma\ |\ \gamma \text{ a component of }\partial S\setminus \partial F\}.$$
For every $S \in S(\phi)$,  define
$$a_{S,k}(\phi)= \#
\{\gamma \in\Omega(S) |\ I(\phi, \gamma)=k\}; \,\,\,
A(\phi,S)=(\sum_{k\in \Q_+}
\frac{a_{S,k}(\phi)}{k},\sum_{k\in \Q_-}
\frac{a_{S,k}(\phi)}{-k})$$

The following two numerical invariants are easy to compute:
$$A(\phi)=\frac 12 \sum_{S\in S(\phi)}A(\phi,S)=(\sum_{k\in \mathbb{Q_+}}
\frac{a_k(\phi)}{k},\sum_{k\in \mathbb{Q_-}} \frac{a_k(\phi)}{-k})$$
$$\Pi(\phi)=\{\frac{1}{-\chi(S)}A(\phi,S)\ |\ S \in
S(\phi)\}$$

We say that two sets of ordered pairs of rational numbers $\{(p_i, q_i)\}$ and $\{(p'_j, q'_j)\}$
are equal {\em up to a flip}, denoted $\{(p_i, q_i)\} \flip \{(p'_j, q'_j)\}$,
if either they are equal, or $\{(p_i, q_i)\}=\{(q'_j, p'_j)\}$.
Immediately we have:

\begin{lemma}\label{sign} Reversing the orientation of $F$ preserves
$A(\phi,S)$, and therefore also $A(\phi)$ and $\Pi(\phi)$, up to a flip.
\end{lemma}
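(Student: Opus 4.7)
The plan is to trace the effect of reversing the orientation of $F$ through the chain of definitions that produce $A(\phi,S)$, $A(\phi)$, and $\Pi(\phi)$. The only ingredient sensitive to the choice of orientation is the integer $n$ appearing in the statement that $\phi^l|N(\gamma)$ is the $n$th power of a positive Dehn twist: a positive Dehn twist along $\gamma$ is defined relative to the orientation on the annular neighborhood $N(\gamma)$, which is induced from that of $F$. Reversing the orientation of $F$ reverses the orientation on each $N(\gamma)$ simultaneously, and with respect to the new orientation the positive Dehn twist is the inverse of the original one. Consequently $I(\phi^l,\gamma)$ is replaced by $-I(\phi^l,\gamma)$, and hence $I(\phi,\gamma)=I(\phi^l,\gamma)/l$ is replaced by $-I(\phi,\gamma)$.

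The next step is the bookkeeping: for every rational $k$ and every component $S\in S(\phi)$, the count $a_{S,k}$ computed after orientation reversal equals the original $a_{S,-k}$, and similarly $a_k(\phi)$ becomes $a_{-k}(\phi)$. Plugging these identities into the definition, the orientation-reversed $A(\phi,S)$ becomes
$$\left(\sum_{k\in \Q_+}\frac{a_{S,-k}(\phi)}{k},\ \sum_{k\in \Q_-}\frac{a_{S,-k}(\phi)}{-k}\right),$$
and reindexing $k\mapsto -k$ in each sum identifies this with the original $A(\phi,S)$ with its two coordinates exchanged; by the definition of $\flip$ this is exactly the flip of $A(\phi,S)$. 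The same substitution applied to $A(\phi)$ yields its coordinate swap, and since $\Pi(\phi)$ is assembled as the set of scaled ordered pairs $\frac{1}{-\chi(S)}A(\phi,S)$, each of which is individually flipped by the same rule, the whole set $\Pi(\phi)$ is flipped in the sense of the given definition.

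I do not anticipate a serious obstacle, as the lemma reduces to sign bookkeeping. The one point to state carefully is the claim that reversing the orientation of $F$ sends each positive Dehn twist along $\gamma\in\Gamma(\phi)$ to its inverse; this is standard but underlies the entire sign rule and therefore deserves explicit mention before the computation.
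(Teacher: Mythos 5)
Your proof is correct and is exactly the argument the paper has in mind: the paper states the lemma with no written proof (``Immediately we have''), and the intended justification is precisely your observation that reversing the orientation of $F$ negates each $I(\phi,\gamma)$, hence swaps the two coordinates in the definition of $A(\phi,S)$, which is the flip. Your explicit bookkeeping with $a_{S,k}\mapsto a_{S,-k}$ is a faithful write-up of that one-line reasoning.
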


We can derive commensurability invariants from $A(\cdot)$ and $\Pi(\cdot)$ as follows:

\begin{theorem}\label{reducible} Suppose $(F_1,\phi_1)$, $(F_2,\phi_2)$ are two reducible maps.
If they are commensurable, then for some $s\in \Q_+$,
$$A(\phi_1)\flip sA(\phi_2)\,\, \text{and}\,\, \Pi(\phi_1)\flip s\Pi(\phi_2).$$
\end{theorem}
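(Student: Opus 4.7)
The plan is to decompose a general commensurability between $(F_1,\phi_1)$ and $(F_2,\phi_2)$ into a composition of two elementary moves---passing from $(F,\phi)$ to a topological lift $(\tilde F,\tilde\phi)$ via a finite cover $\pi\colon\tilde F\to F$, and passing from $(F,\phi)$ to a dynamical power $(F,\phi^k)$---and to track how the invariants $A$ and $\Pi$ transform under each. The heart of the argument is the computation of how the numerical twist $I(\phi,\gamma)$ scales.

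Under a dynamical power one has $\Gamma(\phi^k)=\Gamma(\phi)$, and taking a common $l$ in the definition immediately gives $I(\phi^k,\gamma)=k I(\phi,\gamma)$. Under a topological cover, $\Gamma(\tilde\phi)=\pi^{-1}(\Gamma(\phi))$; for a component $\tilde\gamma$ covering $\gamma=\pi(\tilde\gamma)$ with degree $d_{\tilde\gamma}$ the key formula is $I(\tilde\phi,\tilde\gamma)=I(\phi,\gamma)/d_{\tilde\gamma}$. I would derive this by choosing a power $\tilde l$ of $\tilde\phi$ that is the identity on $\partial(\tilde F\setminus N(\tilde\Gamma))$ and noting that on the annular neighborhood $N(\tilde\gamma)$ the map $\tilde\phi^{\tilde l}$ is a lift of the Dehn twist $\phi^{\tilde l}|_{N(\gamma)}$ through the $d_{\tilde\gamma}$-fold annular cover, hence itself a Dehn twist whose integer exponent is that of $\phi^{\tilde l}|_{N(\gamma)}$ divided by $d_{\tilde\gamma}$. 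Summing $1/I(\tilde\phi,\tilde\gamma)=d_{\tilde\gamma}/I(\phi,\gamma)$ over preimages $\tilde\gamma\mid\gamma$, and using $\sum_{\tilde\gamma\mid\gamma}d_{\tilde\gamma}=\deg\pi$, then gives $A(\tilde\phi)=(\deg\pi)\cdot A(\phi)$; an analogous count over the boundary components of each piece $\tilde S_j$ of $\tilde F\setminus N(\tilde\Gamma)$ of degree $e_j$ over its image $S$, together with $-\chi(\tilde S_j)=e_j(-\chi(S))$, shows $A(\tilde\phi,\tilde S_j)/(-\chi(\tilde S_j))=A(\phi,S)/(-\chi(S))$, so the set $\Pi$ is controlled under topological covers. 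Under a dynamical power, both $A(\phi)$ and each $A(\phi,S)/(-\chi(S))$ rescale by $1/k$, and Lemma~\ref{sign} absorbs any orientation or sign reversal into a flip.

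Equating the two transformation chains on the common cover $\tilde\phi_1^{k_1}=\tilde\phi_2^{k_2}$ then produces a rational $s\in\Q_+$ and, possibly, a flip realizing $A(\phi_1)\flip s A(\phi_2)$ and $\Pi(\phi_1)\flip s\Pi(\phi_2)$. The main obstacle will be the careful derivation of $I(\tilde\phi,\tilde\gamma)=I(\phi,\gamma)/d_{\tilde\gamma}$: one has to choose $\tilde l$ so that $\tilde\phi^{\tilde l}$ restricts to the identity on $\partial(\tilde F\setminus N(\tilde\Gamma))$ (possibly replacing $\phi^l$ by a power divisible by the involved annular degrees), and verify that the induced lift on each annular neighborhood $N(\tilde\gamma)$ really is a pure Dehn twist with the claimed exponent. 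Once this bookkeeping is in place, the rest is a direct count.
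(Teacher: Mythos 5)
Your proposal is correct and takes essentially the same route as the paper's proof: the same key formula $I(\tilde{\phi},\tilde{\gamma})=I(\phi,\gamma)/d_{\tilde{\gamma}}$ obtained from the behavior of Dehn twists under annular coverings, the same reciprocal sum using $\sum_{\tilde{\gamma}} d_{\tilde{\gamma}}=\deg\pi$, the same Euler-characteristic normalization to control $\Pi$, the scaling $I(\phi^k,\gamma)=kI(\phi,\gamma)$ under powers, and Lemma~\ref{sign} to absorb orientation reversal into a flip. The only cosmetic difference is that the paper first reduces to the case $k_1=k_2=1$ and computes directly on the common cover, whereas you organize the argument as a composition of a topological move and a dynamical move; the underlying computations are identical.
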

We postpone the proof of Theorem~\ref{reducible} until \S~\ref{reducible_subsection}.

\begin{remark}
The invariant $\Pi(\cdot)$ is typically better than $A(\cdot)$ at
distinguishing commensurability classes (though not always; see
Example~\ref{2}).  We say that  a D-type map is {\em definite} if it
is a product of Dehn twists in the components of $\Gamma(\phi)$ of
the same sign.
 Note that the property of having a power which
is definite (along $\Gamma(\phi)$) is a commensurability invariant.
The invariant $A(\cdot)$ can distinguish between definite and
indefinite maps, but can never distinguish different
commensurability classes of definite maps, whereas $\Pi(\cdot)$ can.
\end{remark}

\begin{remark} Both $A(\phi)$ and $\Pi(\phi)$ can be encoded as a polynomial (with
fractional exponents), as follows.
For any pair of non-negative rational numbers $(p,q)$, define
$$S(\phi)(p,q)=\{S \in S(\phi)|
\frac{A(\phi,S)}{-\chi(S)}=(p,q)\};\,\,
\lambda(\phi)_{(p,q)}=\frac{\sum_{S \in
S(\phi)(p,q)}\chi(S)}{\chi(F)}. $$

Now define a polynomial pair:
$$P(\phi)(x,y)=(P_1(\phi)(x,y),P_2(\phi)(x,y))=
\sum_{(p,q)\in\Q^2}(p,q)\lambda(\phi)_{(p,q)}x^py^q$$

One can recover $A(\cdot)$ and $\Pi(\cdot)$ from this polynomial by
the formulae
$$\frac{2}{-\chi(F)}A(\phi)=\sum_{(p,q)\in
\Q^2}(p,q)\lambda(\phi)_{(p,q)}=P(\phi)(1,1),$$
and
$$\Pi(\phi)=\{(p,q)|\ \lambda(\phi)_{(p,q)} \ne 0\}.$$
One can show along lines similar to the proof of Theorem~\ref{reducible} (in the next
subsection) that if two reducible maps $(F_1,\phi_1)$, $(F_2,\phi_2)$ are
commensurable, then for some $s\in \Q_+$, we have
$$P(\phi_1)(x,y)\flip s P(\phi_2)(x^s,y^s)$$
\end{remark}

\subsection{Proof of Theorem~\ref{reducible}}\label{reducible_subsection}
In this subsection we give the proof of Theorem~\ref{reducible}. First we state some
lemmas, that can be verified immediately from the definitions.

\begin{lemma}\label{elementary}
Suppose $\phi$ is a reducible map, then for any positive integer $k$ we have equalities:
\begin{equation}\label{eqn1}
I(\phi^k,\gamma)=kI(\phi,\gamma),\,\,
a_{{S,n}}(\phi^k)=a_{{S,\frac{n}{k}}}(\phi),\,\,
A(\phi^k,S)=\frac{1}{k}A(\phi,S)
\end{equation}
\end{lemma}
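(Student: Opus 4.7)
The plan is to unwind the definitions given just before the lemma; nothing more subtle is required, since $\phi$ is already assumed to be in standard form. I would start by observing that if $l$ is a positive integer with $\phi^l$ the identity on $\partial(F\setminus N(\Gamma(\phi)))$, then $(\phi^k)^l=\phi^{kl}$ also restricts to the identity there, so the same $l$ may be used to define $I(\phi^k,\gamma)$. In particular the reducing system $\Gamma(\phi^k)$ coincides with $\Gamma(\phi)$, so $S(\phi^k)=S(\phi)$ and the collections $\Omega(S)$ are unchanged.

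The first equality is then the essential step. The restriction $\phi^l|_{N(\gamma)}$ is, by definition, the $I(\phi^l,\gamma)$th power of a positive Dehn twist $\tau_\gamma$ along the core circle of $N(\gamma)$; composing with itself $k$ times, one obtains $(\phi^l)^k|_{N(\gamma)} = \tau_\gamma^{k\,I(\phi^l,\gamma)}$, whence $I(\phi^{kl},\gamma) = k\,I(\phi^l,\gamma)$. Dividing by $l$ yields $I(\phi^k,\gamma)=k\,I(\phi,\gamma)$. The second identity is then an immediate substitution: the defining condition $I(\phi^k,\gamma)=n$ is equivalent to $I(\phi,\gamma)=n/k$, which gives $a_{S,n}(\phi^k)=a_{S,n/k}(\phi)$.

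For the third equality, I would plug this index shift into the definition $A(\phi^k,S)=\bigl(\sum_{n\in\Q_+}a_{S,n}(\phi^k)/n,\ \sum_{n\in\Q_-}a_{S,n}(\phi^k)/(-n)\bigr)$ and change variables via $m=n/k$. Since $k$ is a positive integer, positivity and negativity of indices are preserved, so the two sums rearrange coordinatewise and a factor of $1/k$ can be pulled out, producing $A(\phi^k,S)=\tfrac{1}{k}A(\phi,S)$. The only thing that could be called an \emph{obstacle} here is bookkeeping --- making sure one uses a single $l$ valid for both $\phi$ and $\phi^k$, and that the sign split in $A(\cdot,S)$ is respected under the reindexing --- but nothing conceptually new arises.
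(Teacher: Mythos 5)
Your proposal is correct and matches the paper's intent exactly: the paper states this lemma without proof, remarking only that it ``can be verified immediately from the definitions,'' and your argument is precisely that verification --- fixing a single $l$ valid for both $\phi$ and $\phi^k$, iterating the Dehn twist power to get $I(\phi^{kl},\gamma)=kI(\phi^l,\gamma)$, and reindexing the sums defining $A(\cdot,S)$. Nothing further is needed.
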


\begin{lemma}\label{well2}
Suppose two automorphisms $\phi_1$ and $\phi_2$ on $F$ are isotopic,
and two circles $\gamma_1$ and $\gamma_2$ on $F$ are isotopic. If
$\phi_i$ is D-type along $\gamma_i$, $i=1,2$, then
$I(\phi_1,\gamma_1)=I(\phi_2,\gamma_2)$.
\end{lemma}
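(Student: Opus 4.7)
The plan is to reduce the lemma to a single well-definedness assertion about Dehn twists, and then verify that assertion using the fact that Dehn twists along essential non-peripheral curves have infinite order in the mapping class group.

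First I would eliminate the curves. Since $\gamma_1$ and $\gamma_2$ are isotopic on $F$, there exists an ambient isotopy $h_t:F\to F$ with $h_0=\id$, $h_1(N(\gamma_1))=N(\gamma_2)$, and $h_1(\gamma_1)=\gamma_2$. Set $\phi_1' = h_1\phi_1 h_1^{-1}$. Then $\phi_1'$ is isotopic to $\phi_1$ (hence to $\phi_2$), it is D-type along $\gamma_2$, and since $h_1$ is orientation-preserving it conjugates $\tau_{\gamma_1}$ to $\tau_{\gamma_2}$, so $I(\phi_1',\gamma_2)=I(\phi_1,\gamma_1)$. Replacing $\phi_1$ by $\phi_1'$ I may assume $\gamma_1=\gamma_2=:\gamma$.

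Next I would put both maps into a rigid local form. Using $\MCG(N(\gamma),\partial N(\gamma))\cong\Z$, isotope each $\phi_i$ rel $\partial N(\gamma)$ so that $\phi_i|_{N(\gamma)}$ is literally $\tau_\gamma^{n_i}$, with $n_i=I(\phi_i,\gamma)$. Form $\psi=\phi_1\phi_2^{-1}$. By hypothesis $\psi\simeq\id_F$, and by construction $\psi$ fixes $\partial N(\gamma)$ pointwise with $\psi|_{N(\gamma)}=\tau_\gamma^{n_1-n_2}$. Letting $T:F\to F$ denote the Dehn twist $\tau_\gamma^{n_2-n_1}$ supported in $N(\gamma)$ and extended by the identity outside, the composite $\psi T$ is the identity on all of $N(\gamma)$, while as a mapping class $[\psi T]=[T]\in\MCG(F)$ equals $[\tau_\gamma]^{n_2-n_1}$. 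Thus, if $n_1\neq n_2$, a nonzero power of $\tau_\gamma$ admits a representative that is the identity on a whole tubular neighborhood of its core curve.

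The crux, and the main obstacle, is ruling this out. This is a version of the classical injectivity of $\Z\cong\langle\tau_\gamma\rangle\hookrightarrow\MCG(F)$ for an essential non-peripheral curve $\gamma$, but in the slightly stronger form that no nonzero $[\tau_\gamma]^k$ is represented by a homeomorphism supported on $F\setminus\mathrm{int}\, N(\gamma)$. The cleanest way I would carry this out is to pass to the infinite cyclic cover $\widehat{F}\to F$ dual to $\gamma$ (or to an appropriate finite cover if $\gamma$ is separating). The curve $\gamma$ has disjoint lifts $\widehat\gamma_j$ permuted by a generating deck transformation, and any representative $\eta$ of $[\tau_\gamma]^k$ lifts to a $\widehat\eta$ whose translation number along the lifts of a dual arc is exactly $k$; if $\eta$ is the identity on $N(\gamma)$, then $\widehat\eta$ fixes each $\widehat\gamma_j$ pointwise, forcing the translation number to be $0$, i.e.\ $k=0$. (Alternatively one may quote the theory of fractional Dehn twist coefficients, which produces a real-valued, isotopy-invariant quantity that specializes to $I(\phi,\gamma)$ for D-type maps, making the lemma a tautology.) Given this input, the two steps above combine to give $n_1=n_2$ and complete the proof.
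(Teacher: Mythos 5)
The paper itself offers no argument for this lemma (it is listed among facts that ``can be verified immediately from the definitions''), so the only real question is whether your argument is sound, and unfortunately the step you correctly identify as the crux is not. The claim you reduce to --- that no nonzero power of $[\tau_\gamma]$ is represented by a homeomorphism supported on $F\setminus\mathrm{int}\,N(\gamma)$ --- is false. Let $\gamma'$ be a parallel push-off of $\gamma$ lying outside $N(\gamma)$; then $\tau_{\gamma'}$ is supported in a small annulus disjoint from $N(\gamma)$, is the identity on all of $N(\gamma)$, and represents $[\tau_\gamma]$. Your covering-space justification breaks at exactly this example: a lift of a representative of $[\tau_\gamma]^k$ has a ``translation number'' only after a choice of normalization, and no normalization makes every representative of $[\tau_\gamma]^k$ translate by $k$. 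For $\tau_{\gamma'}$, the lift that fixes one lift of $\gamma$ pointwise fixes all of them pointwise (the lifted twist is supported in the preimage annuli of $N(\gamma')$, on whose boundaries it is the identity, so it is the identity on every complementary piece), and your recipe returns $0$ rather than $1$.

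The underlying problem is that after your (correct) reductions, the only information retained about $\psi T$ outside $N(\gamma)$ is that it is some homeomorphism, and with only that the desired conclusion is false, as the example shows. What makes the lemma true is precisely the part of the hypothesis your argument discards: $\gamma_i$ is a component of the \emph{canonical} reducing system $\Gamma(\phi_i)$ and $\phi_i$ is in Nielsen--Thurston standard form (periodic or pseudo-Anosov) on the complementary pieces. A correct proof must use this, e.g.\ via the Birman--Lubotzky--McCarthy theorem that the kernel of the cutting homomorphism from the stabilizer of $\Gamma$ to $\MCG(F\setminus N(\Gamma))$ is the free abelian group generated by the twists along the components of $\Gamma$, combined with the uniqueness of the standard form on the pieces up to isotopy rel boundary; equivalently, via the isotopy invariance of the twisting (fractional Dehn twist) coefficients at the reducing curves. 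Your parenthetical appeal to fractional Dehn twist coefficients points in the right direction, but as written it cites essentially the statement to be proved rather than proving it.
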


From the definitions, from Lemma~\ref{well2} and from the fact that the
reducible system $\Gamma$ is unique up to isotopy (see Theorem~1 in
\cite{Wu} for example), we have

\begin{lemma}\label{isotopy}
$\Pi(\phi)$ and $A(\phi)$ are isotopy  invariants.
\end{lemma}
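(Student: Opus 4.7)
The plan is to unpack the definitions of $A(\phi)$ and $\Pi(\phi)$ and show that every ingredient that enters them depends only on the isotopy class of $\phi$. The three ingredients are: (a) the reducing system $\Gamma(\phi)$ together with its complementary pieces $S \in S(\phi)$; (b) the twist numbers $I(\phi,\gamma)$ for $\gamma \in \Gamma(\phi)$; and (c) the Euler characteristics $\chi(S)$ used in $\Pi(\phi)$.

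First I would fix two representatives $\phi_1,\phi_2$ in the same isotopy class, both in standard form. By the uniqueness of the reducing system up to isotopy (Wu's theorem, cited above), $\Gamma(\phi_1)$ is isotopic to $\Gamma(\phi_2)$, so after an ambient isotopy I may assume that $N(\Gamma(\phi_1))=N(\Gamma(\phi_2))=:N(\Gamma)$, and that the identification of components of $F \setminus N(\Gamma)$ and of the annular components of $N(\Gamma)$ is canonical. This matches up the index sets appearing in the definitions of $a_{S,k}$, $A(\phi,S)$, $\Pi(\phi)$ and takes care of (a) and (c); note that $\chi(S)$ is manifestly an invariant of the isotopy class of $S$.

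Next I would address the twist numbers (b). Choose an integer $l$ large enough that both $\phi_1^l$ and $\phi_2^l$ are the identity on $\partial(F\setminus N(\Gamma))$ and are Dehn twists on each annular component of $N(\Gamma)$; such an $l$ exists by Proposition~\ref{JG} (and one can take the lcm of the $l$'s for $\phi_1$ and $\phi_2$ separately). Since $\phi_1\simeq\phi_2$, we have $\phi_1^l\simeq\phi_2^l$, and both are D-type along the same circles $\gamma\in\Gamma$. Lemma~\ref{well2} then gives $I(\phi_1^l,\gamma)=I(\phi_2^l,\gamma)$, and dividing by $l$ yields $I(\phi_1,\gamma)=I(\phi_2,\gamma)$ for every $\gamma \in \Gamma$.

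Combining these three ingredients, every summand in $A(\phi,S)$ and in $\Pi(\phi)$ agrees for $\phi_1$ and $\phi_2$, so $A(\phi_1)=A(\phi_2)$ and $\Pi(\phi_1)=\Pi(\phi_2)$. There is no real obstacle here: the only slightly delicate point is the simultaneous choice of $l$ and the bookkeeping that the canonical isotopy matching $\Gamma(\phi_1)$ with $\Gamma(\phi_2)$ also matches the components $S \in S(\phi_i)$ and the boundary sets $\Omega(S)$, which is automatic once the reducing systems are isotoped to agree.
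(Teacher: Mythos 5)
Your proposal is correct and follows essentially the same route as the paper, which proves the lemma by combining the uniqueness of the reducing system up to isotopy (Wu's theorem) with Lemma~\ref{well2}; you have simply written out the bookkeeping (matching the pieces $S$, choosing a common power $l$, dividing the twist numbers by $l$) that the paper leaves implicit.
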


We now give the proof of Theorem~\ref{reducible}.

\begin{proof}
 Suppose $(F_1, \phi_1)$ and
$(F_2, \phi_2)$ are commensurable. Then there is a surface
$\tilde{F}$, automorphisms $\tilde{\phi}_1$ and $\tilde{\phi}_2$ of
$\tilde{F}$, and nonzero integers $k_1$ and $k_2$, so that
$(\tilde{F},\tilde{\phi}_i)$ covers $(F_i,\phi_i)$ for $i=1,2$, and
$\tilde{\phi}_1^{k_1} = \tilde{\phi}_2^{k_2}$ as automorphisms of
$\tilde{F}$. Denote the covering $ \tilde F \to F_i$ by $p_i$, $
i=1,2$. By Lemma~\ref{sign}, we may assume that the orientations of
$\tilde F$, $F_1$ and $F_2$ have been chosen so that both $p_1,p_2$
are orientation preserving.

Assume first $k_1=k_2=1$ for the moment. By Lemma~\ref{isotopy},
we may assume that $\tilde \phi_1= \tilde \phi_2$ as maps in usual
sense (rather than in their isotopy class).

Consider the following commutative diagram
\[ \begin{CD}
\partial    p_1^{-1}(N(\Gamma(\phi_1)))@> \tilde{\phi_1}^{k}| >>
\partial     p_1^{-1}(N(\Gamma(\phi_1)))\\
@V {p_1}| VV                              @VV p_1|  V\\
\partial  N(\Gamma(\phi_1)) @> \phi_1^{k}| >> \partial N(\Gamma(\phi_1))
\end{CD}  \]
where $k$ is chosen so that  $\phi_1^{k}|_{\partial
N(\Gamma(\phi_1))}=\id|_{\partial N(\Gamma(\phi_1))}$. It follows
that the restriction of $\tilde{\phi_1}^{k}$ to ${\partial p_1^{-1}(N(\Gamma(\phi_1)))}$ is
a deck transformation of the covering $p_1| $. Since $p_1|  $ is
a finite covering, by replacing $k$ by a power if necessary, we can
assume that $\tilde{\phi_1}^{k}$ agrees with $\id$ on $\partial
p_1^{-1}(N(\Gamma(\phi_1)))$ and consequently maps every component of
$p_1^{-1}(N(\Gamma(\phi_1)))$ to itself. For such a $k$, each
$\phi_i^k$, $\tilde{\phi_i}^k$, $i=1,2$ are D-type along their
respective reducible systems, where
$\Gamma(\tilde{\phi_i}^k)=p_i^{-1}(\Gamma(\phi_i))$.

For each $S_1\in S(\phi_1)$ and each component $\tilde S$ of
$p_1^{-1}(S_1)$, there  exists a component $S_2\in S(\phi_2)$, such
that $\tilde S$ is a component of $p_2^{-1}(S_2)$. Assume $p_i|:
\tilde S \to S_i$ are $l_i$ sheeted coverings $i=1,2$.

Pick a component $ \gamma \in \Omega(S_1)$. Suppose
$\{\delta_1,\dots,\delta_t\}=(p_1|\tilde S)^{-1}(\gamma)$, and
$p_1:\ \delta_i \to \gamma$ is a $d_i$ sheeted covering. Then
$\sum_{i=1}^l d_i=l_1$.

Under an $m$-fold covering of annuli, a Dehn twist on the covering annulus projects
to the $m$th power of a Dehn twist on the image annulus. Consequently
$d_iI(\tilde{\phi_1}^k,\delta_i)=I(\phi_1^k,\gamma)$, and by equation~\ref{eqn1} we
have
\begin{equation}\label{eqn2}
I(\tilde{\phi_1}^k,\delta_i)=\frac{kI(\phi_1,\gamma)}{d_i}
\end{equation}
and moreover the $I(\tilde{\phi_1}^k,\delta_i)$ all have the same sign as the
$I(\phi_1,\gamma),\ i=1,\dots,t$ (because $p_1$ preserves
orientation and $k>0$). Suppose $I(\phi_1,\gamma)\ne 0$. Then by equation~\ref{eqn2},
\begin{equation}\label{eqn3}
\sum_{i=1}^t \frac{1}{I(\tilde{\phi_1}^k,\delta_i)}=\sum_{i=1}^t
\frac{d_i}{kI(\phi_1,\gamma)}=\frac{l_1}{kI(\phi_1,\gamma)}
\end{equation}

Now we sum over circles $\delta \in \Omega(\tilde S)$ with positive
$I(\tilde{\phi_1},\delta)$:

\begin{align*}
\sum_{l>0}{\frac{a_{\tilde S,l}(\tilde{\phi_1}^k)}{l}}
&=\sum_{l>0}{\frac{\# \{\delta \in\Omega(S'_1)|I(\tilde \phi^k,
\delta)=l\}}{l}}\\
&=\sum_{\stackrel{\delta \in \Omega(\tilde
S)}{I(\tilde{\phi_1},\delta)>0}}
{\frac{1}{I(\tilde{\phi_1}^k,\delta)}} =\sum_{\stackrel{\gamma_i\in
\Omega(S_1)}{I(\phi_1,\gamma_i)>0}}{\sum_{\delta\in
(p_1|\tilde S)^{-1}(\gamma_i)}{\frac{1}{I(\tilde{\phi_1}^k,\delta)}}}\\
&= \frac{l_1}{k}\sum_{\stackrel{\gamma_i\in
\Omega(S_1)}{I(\phi_1,\gamma_i)>0}}{\frac{1}{I(\phi_1,\gamma_i)}}
=\frac{l_1}{k}\sum_{l>0}{\frac{a_{S_1,l}(\phi_1)}{l}}
\end{align*}
where the penultimate equality follows from equation~\ref{eqn3}.

By a similar computation, $$ \sum_{l<0} {\frac{a_{\tilde
S,l}(\tilde{\phi_1}^k)}{l}}=\frac{l_1}{k}
\sum_{l<0}{\frac{a_{S_1,l}(\phi_1)}{l}}$$
and therefore
\begin{equation}\label{eqn4}
A(\tilde{\phi_i}^k,\tilde S)=\frac{l_i}{k}A(\phi_i,S_i),\ i=1,2
\end{equation}

By equation~\ref{eqn1} we have
\begin{equation}\label{eqn5}
A(\tilde{\phi_1},\tilde S)=kA(\tilde{\phi_1}^k,\tilde S)
=A(\tilde{\phi_2},S_2')
\end{equation}
Since $l_i=\chi(\tilde S)/\chi(S_i)$,  by equations~\ref{eqn4} and \ref{eqn5}, we
get
\begin{equation}\label{eqn6}
\frac{A(\phi_1,S_1)}{-\chi(S_1)}=
\frac{A(\tilde{\phi_1},\tilde S)}{-\chi(\tilde S)}=
\frac{A(\phi_2,S_2)}{-\chi(S_2)}
\end{equation}

From the definition of $\Pi(\cdot)$ we have $\Pi(\phi_2)\subset \Pi(\phi_1)$. By symmetry we
have $\Pi(\phi_2)=\Pi(\phi_1)$.
Summing over all $\Gamma$ in the argument above in place of $\Omega(S_1)$, we get similarly
$$\frac{A(\phi_1)}{\chi(F_1)}=\frac{A(\phi_2)}{\chi(F_2)}$$
From equation~\ref{eqn1} we have
$\Pi(\phi^k)=\Pi(\phi)/k$ and $A(\phi^k)=A(\phi)/k$ and the proof is complete.
\end{proof}
From the proof above immediately we have
\begin{corollary}\label{topologically} If $(F_1,\phi_1)$ and $(F_2,\phi_2)$
are topologically commensurable, then
$$\frac{A(\phi_1)}{\chi(F_1)}\flip \frac{A(\phi_2)}{\chi(F_2)}\,\, \text{and}\,\, \Pi(\phi_1)\flip \Pi(\phi_2).$$
\end{corollary}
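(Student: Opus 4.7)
The plan is to observe that Corollary~\ref{topologically} is essentially what the first half of the proof of Theorem~\ref{reducible} already establishes; the rational scalar $s$ in Theorem~\ref{reducible} only enters at the end to account for passage to powers $\phi_i^{k_i}$ with $|k_i|>1$. In the topologically commensurable case, by Definition~\ref{topologically_dynamically_definition} we have $|k_1|=|k_2|=1$, so $\tilde{\phi}_1^{\pm 1} = \tilde{\phi}_2^{\pm 1}$ on $\tilde F$ and no such rescaling is needed.

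\textbf{Step 1: Reduce to the case $\tilde\phi_1=\tilde\phi_2$.} After possibly replacing one of the two automorphisms by its inverse, we may assume $\tilde\phi_1=\tilde\phi_2$ (as isotopy classes). Taking the inverse of a D-type-along-$\Gamma$ automorphism negates each Dehn twist exponent $n$, hence negates $I(\phi,\gamma)$ for every $\gamma \in \Gamma$; in the definition of $A(\phi,S)=\bigl(\sum_{k\in\Q_+}a_{S,k}(\phi)/k,\;\sum_{k\in\Q_-}a_{S,k}(\phi)/(-k)\bigr)$ this interchanges the two coordinates, which is precisely the ``flip'' $\sim$. Similarly, Lemma~\ref{sign} absorbs the orientation choice for the covers $p_i:\tilde F \to F_i$ into a flip. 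These two sources of flip together account for the $\flip$ in the conclusion.

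\textbf{Step 2: Repeat the covering computation without the $k$.} Having arranged $\tilde\phi_1=\tilde\phi_2$, run the argument from the proof of Theorem~\ref{reducible} verbatim. Choose a positive integer $k$ so that $\tilde\phi_1^{k}$ is the identity on $\partial p_1^{-1}(N(\Gamma(\phi_1)))\cup\partial p_2^{-1}(N(\Gamma(\phi_2)))$, so that both $\phi_i^k$ and $\tilde\phi_i^k$ are D-type along their respective reducible systems. For any component $\tilde S$ of $\tilde F\setminus N(\Gamma)$, write $p_i|:\tilde S\to S_i$ for the induced $l_i$-fold cover with $l_i=\chi(\tilde S)/\chi(S_i)$. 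The computation yielding equations (\ref{eqn4}), (\ref{eqn5}), (\ref{eqn6}) in the proof of Theorem~\ref{reducible} now gives directly
\[
\frac{A(\phi_1,S_1)}{-\chi(S_1)}=\frac{A(\tilde\phi_1,\tilde S)}{-\chi(\tilde S)}=\frac{A(\phi_2,S_2)}{-\chi(S_2)}.
\]
Since $\tilde\phi_1=\tilde\phi_2$, the middle term is unambiguous (there is no separate $k_1$ or $k_2$ to worry about), which is what prevents a nontrivial scalar $s$ from appearing.

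\textbf{Step 3: Deduce the two invariants.} Taking the pairs on each side as $\tilde S$ ranges over the components of $\tilde F\setminus N(\Gamma)$ gives $\Pi(\phi_1)\flip \Pi(\phi_2)$ (the $\flip$ coming from Step 1). Summing the numerators and denominators separately over all components of $\tilde F\setminus N(\Gamma)$ gives $A(\tilde\phi_1)=A(\tilde\phi_2)$ with
\[
A(\tilde\phi_i)=\frac{-\chi(\tilde F)}{-\chi(F_i)}A(\phi_i),
\]
so dividing by $-\chi(\tilde F)$ yields $A(\phi_1)/\chi(F_1)\flip A(\phi_2)/\chi(F_2)$. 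The only real subtlety, and the step I expect to have to argue carefully, is the bookkeeping of the flip in Step 1: one must check that inverting an automorphism in standard form still yields an automorphism in standard form with the same reducible system and that the effect on $A(\phi,S)$ is exactly to swap the two coordinates (so that one ends up with a single, well-defined flip rather than an uncontrolled reshuffle of the $\Pi$ data).
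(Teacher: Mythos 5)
Your proposal is correct and matches the paper's approach: the paper simply declares the corollary immediate from the proof of Theorem~\ref{reducible}, whose first stage (the case $k_1=k_2=1$) is exactly your Step~2 establishing equation~\ref{eqn6} before any rescaling by powers enters. Your Step~1, tracking that an inverse (allowed since topological commensurability only requires $|k_i|=1$) negates every $I(\phi,\gamma)$ and hence contributes only a coordinate swap absorbed by $\flip$, is a detail the paper leaves implicit, and your treatment of it is sound since the canonical reducing system and standard form are preserved under inversion and $\flip$ is an involution.
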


\begin{remark} We remind the reader that our invariants are defined for all
reducible maps (and not just D-type examples and their roots). When
reducible maps are not the roots of the D-type maps, then they have
pseudo-Anosov orbits, and we can combine the invariants defined in
\S~3 and in \S~4. For example, see the Proposition below and Example~\ref{5}.
\end{remark}

\begin{proposition}\label{Pi-lambda}
Suppose $(F_1,\phi_1)$, $(F_2,\phi_2)$ are two commensurable
reducible maps. Then for some $s\in \Q_+$,
$$\log\lambda(\phi_1)= s\log\lambda(\phi_2)\,\, \text{and} \,\,\Pi(\phi_1)\flip
s^{-1}\Pi(\phi_2).$$
\end{proposition}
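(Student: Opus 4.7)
The plan is to parallel the proof of Theorem~\ref{reducible}, keeping track of the powers $k_1,k_2$ instead of specializing to $k_1=k_2=1$, and to combine the scaling of $\Pi$ it yields with the analogous scaling of $\log\lambda$ from Lemma~\ref{lambda-Delta}. By hypothesis there exist a surface $\tilde F$, automorphisms $\tilde\phi_1,\tilde\phi_2$ of $\tilde F$, and positive integers $k_1,k_2$ with $(\tilde F,\tilde\phi_i)$ covering $(F_i,\phi_i)$ topologically and $\tilde\phi_1^{k_1}=\tilde\phi_2^{k_2}$. After putting $\phi_1,\phi_2$ into the standard form of Proposition~\ref{JG}, each pseudo-Anosov orbit of $\phi_i$ lifts to a pseudo-Anosov orbit of $\tilde\phi_i$, and these in turn coincide with pseudo-Anosov orbits of the common map; in particular each pseudo-Anosov component of $\phi_1$ is matched with one of $\phi_2$ through the common cover.

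For the invariant $\Pi$, the proof of Theorem~\ref{reducible} already shows (in the case $k_i=1$) that a topological cover preserves $\Pi$ up to a flip, so $\Pi(\tilde\phi_i)\flip\Pi(\phi_i)$. Lemma~\ref{elementary} gives $\Pi(\psi^k)=\Pi(\psi)/k$ for any reducible $\psi$. Chaining these,
$$\frac{\Pi(\phi_1)}{k_1}\flip\Pi(\tilde\phi_1^{k_1})=\Pi(\tilde\phi_2^{k_2})\flip\frac{\Pi(\phi_2)}{k_2},$$
so $\Pi(\phi_1)\flip (k_1/k_2)\,\Pi(\phi_2)$.

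For the dilatation, I would apply Lemma~\ref{lambda-Delta}(1) to a matched pair of pseudo-Anosov components. Topological commensurability preserves $\log\lambda$, while taking a $k$th power scales $\log\lambda$ by $k$ (by the definition of dilatation). Therefore
$$k_1\log\lambda(\phi_1)=\log\lambda(\tilde\phi_1^{k_1})=\log\lambda(\tilde\phi_2^{k_2})=k_2\log\lambda(\phi_2).$$
Setting $s=k_2/k_1\in\Q_+$ yields $\log\lambda(\phi_1)=s\log\lambda(\phi_2)$ and simultaneously $\Pi(\phi_1)\flip s^{-1}\Pi(\phi_2)$, since $k_1/k_2=s^{-1}$. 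The step I expect to need the most care is the bookkeeping behind the phrase ``matched pseudo-Anosov components'': distinct pseudo-Anosov orbits of $\phi_i$ may have different orbit lengths, and the covering $\tilde\phi_i$ may permute their lifts cyclically in a nontrivial way. One therefore has to verify, using standard form, that the dilatation on each pseudo-Anosov component of the common map $\tilde\phi_1^{k_1}=\tilde\phi_2^{k_2}$ is unambiguously defined, and that it equals the appropriate power of the dilatation on the corresponding downstairs component under both $\phi_1$ and $\phi_2$; once this is in place, the equalities above hold component by component and the proposition follows.
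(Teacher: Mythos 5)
Your proposal is correct and follows essentially the same route as the paper: the paper likewise passes to powers $\phi_i^{k_i}$ that are topologically commensurable via the common cover $\tilde\phi_1^{k_1}=\tilde\phi_2^{k_2}$, invokes Corollary~\ref{topologically} together with Lemma~\ref{elementary} to get $\Pi(\phi_1)/k_1\flip\Pi(\phi_2)/k_2$, and uses the equality of dilatation sets under topological covering plus $\log\lambda(\phi^k)=k\log\lambda(\phi)$ to get $k_1\log\lambda(\phi_1)=k_2\log\lambda(\phi_2)$, setting $s=k_2/k_1$. The component-matching issue you flag is simply asserted as evident in the paper, so your extra care there is a refinement, not a divergence.
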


Here we think of $\lambda(\phi)$ for a reducible map $\phi$ as a (possibly empty)
{\em set} of dilatations of the set of restrictions of $\phi$ to its
pseudo-Anosov orbits.

\begin{proof}
From the definition of commensurability, there are positive integers
$k_1$ and $k_2$ such that $(F_1,\phi_1^{k_1})$ and
$(F_2,\phi_2^{k_2})$ are topologically commensurable, both covered
by $(\tilde{F},\tilde{\phi})$. Evidently we have
$\lambda(\phi_1^{k_1})=\lambda(\tilde \phi) =\lambda(\phi_2^{k_2})$,
and therefore $k_1\log\lambda(\phi_1)=\log\lambda(\phi_1^{k_1})
=\log\lambda(\phi_2^{k_2})=k_2\log\lambda(\phi_2)$ and then
$$\log\lambda(\phi_1)=\frac{k_2}{k_1}\log\lambda(\phi_2).$$
On the other hand by Corollary \ref{topologically} and (4.1), we
have $$\frac {\Pi(\phi_1)}{k_1}=\Pi(\phi_1^{k_1})\flip
\Pi(\phi_2^{k_2})=\frac {\Pi(\phi_1)}{k_2}$$
and therefore
$${\Pi(\phi_1)}\flip\frac{k_1} {k_2}{\Pi(\phi_1)}.$$
The Proposition is proved by setting $s=\frac{k_2} {k_1}$.
\end{proof}

\subsection{Examples of reducible automorphisms}\label{example_subsection}

In this section we give several examples, which illuminate the meaning of the invariants
defined above. A D-type map on an oriented $F$ can be indicated pictorially by assigning integers
to disjoint essential simple closed curves on a surface; we use this convention in what follows.

\begin{example}\label{1} 
Dehn twists in separating and non-separating
curves (on the same surface) are commensurable. In Figure~3, Let $\tilde\phi$ be a
D-type automorphism on a surface $F$ of genus 3 generated by full Dehn
twists on circles $c$ and $c'$ as indicated in the figure.

\begin{center}
  \psfrag{1}[]{$\tau_1$}
  \psfrag{2}[]{$\tau_2$}
  \psfrag{a}[]{$c_1$}
  \psfrag{b}[]{$c_2$}
  \psfrag{c}[]{$c$}
  \psfrag{d}[]{$c'$}
  \psfrag{A}[]{$F/\tau_1$}
  \psfrag{B}[]{$F/\tau_2$}
  \psfrag{F}[]{$F$}
  \psfrag{e}[]{$1$}
\includegraphics[width=3.7in]{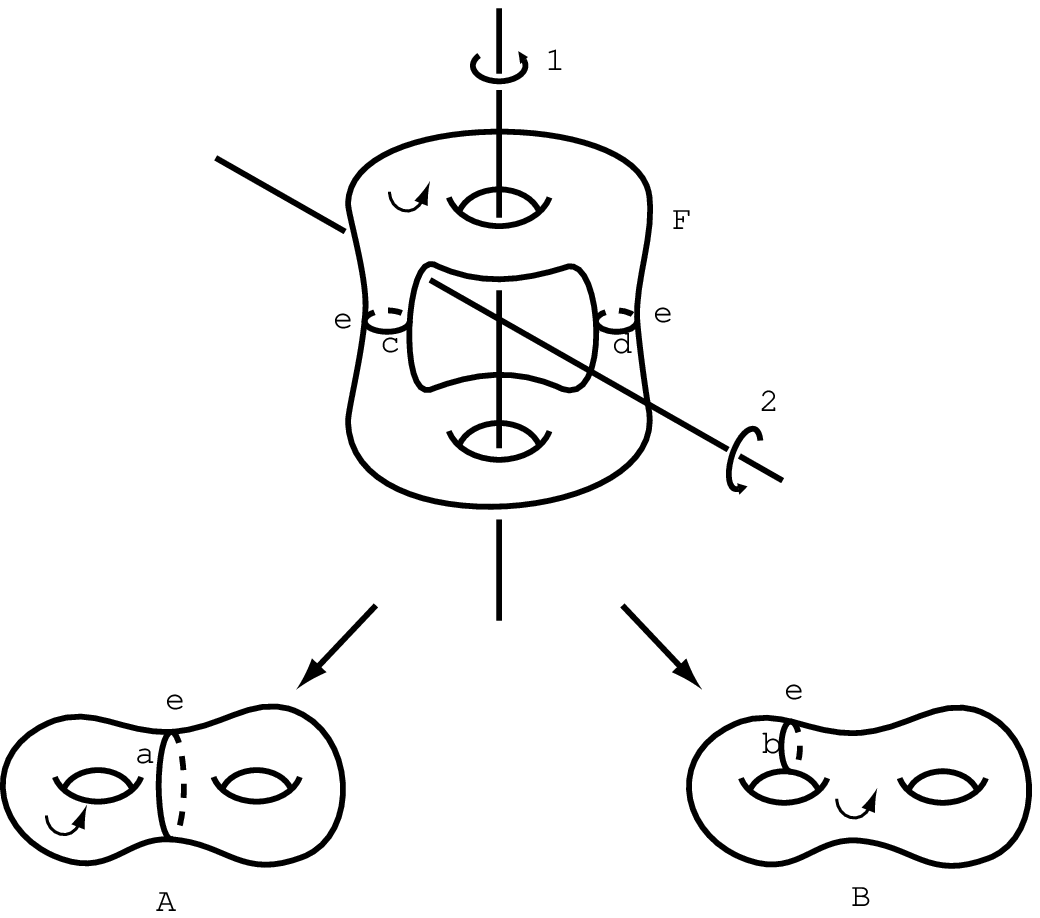}
\vskip 0.5 truecm \centerline{Figure 3}
\end{center}

Then $\tilde\phi$ is invariant under both $\pi$-rotations along $\tau_1$
and $\tau_2$. Hence $\tilde\phi$ induces $\phi_i$ on $F/\tau_i$,
where $\phi_i$ is the Dehn twist along the circle $c_i$. Since $c_1$
is separating while $c_2$ not, $\phi_1$ and $\phi_2$ are not
conjugate. But from the construction they are commensurable.
\end{example}

\begin{example}\label{2} This example show that $\Pi(\phi)$ is not always
finer than $A(\phi)$. Four automorphisms are depicted in Figure~4.
By computing $A(\phi)$ and $\Pi(\phi)$, it can be seen that no pair
of them are commensurable. Notice that on one hand
$A(\phi_1)=A(\phi_2)=({1},{1})$ and
$\{(1,0),(\frac{1}{2},\frac{1}{2}),(0,\frac{1}{3})\}=\Pi(\phi_1)\ne\Pi(\phi_2)=\{(1,0),(\frac{1}{4},\frac{1}{4}),(0,1)\}$,
and on the other hand $(2,1)=A(\phi_3)\ne
A(\phi_4)=({1},\frac{1}{3})$ and
$\Pi(\phi_3)=\Pi(\phi_4)=\{(1,0),(\frac{1}{3},\frac{2}{9})\}$.

\begin{center}
\psfrag{a}[]{$1$} \psfrag{b}[]{$-1$} \psfrag{c}[]{$(F_1,\phi_1)$}
\psfrag{d}[]{$A(\phi_1)=(\frac{1}{6},\frac{1}{6}),\pi(\phi_1)=\{(1,0),(\frac{1}{2},\frac{1}{2}),(0,\frac{1}{3})\}$}
\psfrag{e}[]{$1$} \psfrag{f}[]{$-1$} \psfrag{g}[]{$(F_2,\phi_2)$}
\psfrag{h}[]{$A(\phi_2)=(\frac{1}{6},\frac{1}{6}),\pi(\phi_2)=\{(1,0),(\frac{1}{4},\frac{1}{4}),(0,1)\}$}
\psfrag{k}[]{$1$} \psfrag{l}[]{$1$} \psfrag{m}[]{$-1$}
\psfrag{n}[]{$(F_3,\phi_3)$}
\psfrag{o}[]{$A(\phi_3)=(\frac{1}{5},\frac{1}{10}),\pi(\phi_3)=\{(1,0),(\frac{1}{3},\frac{2}{9})\}$}
\psfrag{p}[]{$1$} \psfrag{q}[]{$-3$} \psfrag{r}[]{$(F_4,\phi_4)$}
\psfrag{s}[]{$A(\phi_4)=(\frac{1}{4},\frac{1}{12}),\pi(\phi_4)=\{(1,0),(\frac{1}{3},\frac{2}{9})\}$}
\psfrag{i}[]{$(a)$} \psfrag{j}[]{$(b)$}
\includegraphics[width=4.99in]{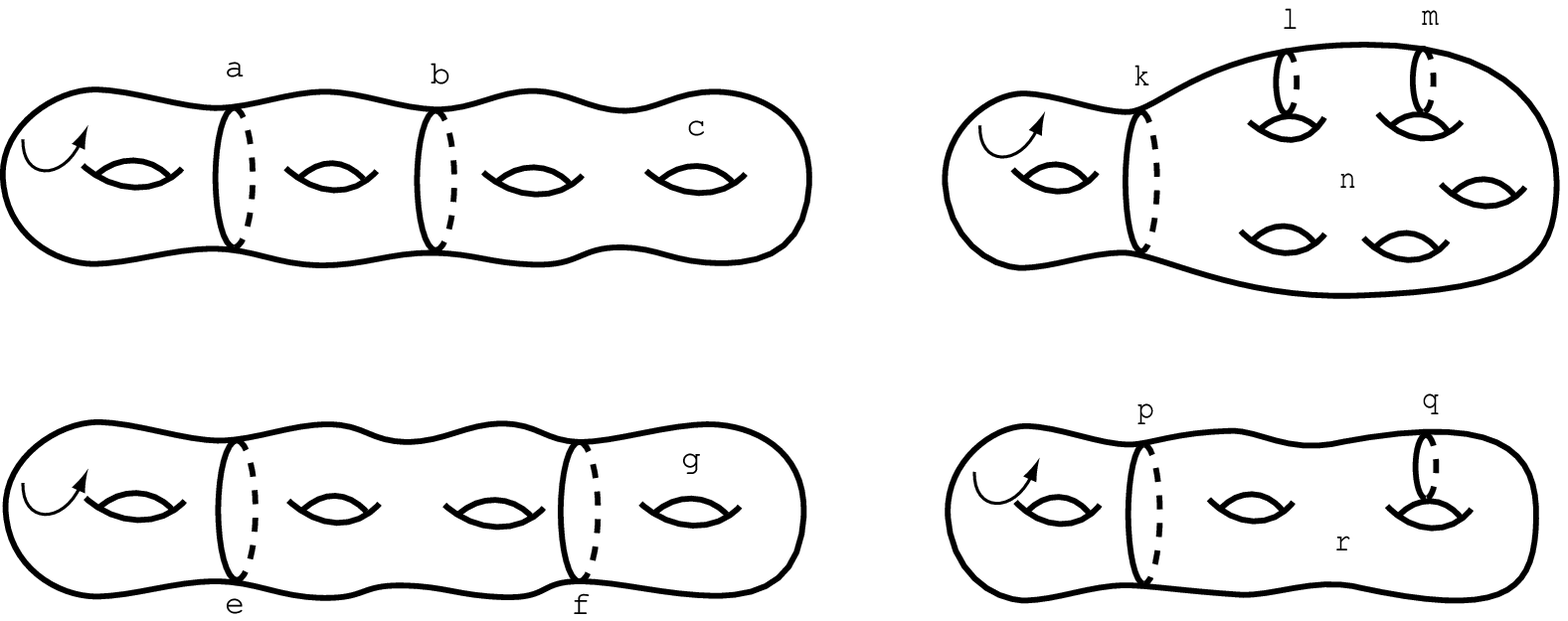}
\vskip 0.5 truecm \centerline{Figure 4}
\end{center}
\end{example}

\begin{example}[Minimal elements]\label{4} Let $\phi_g$ be a
orientation preserving periodic map on $\Sigma_g$ of order $4g+2$
which rotates angle $\frac{\pi}{2g+1}$ around its unique fixed point
$x_g$ (see the proof of Proposition \ref{Seifert}). Remove a
$\phi_g$-invariant disc at $x_g$ from $\Sigma_g$ to get
$\Sigma_{g,1}$.  Connect $\Sigma_{2,1}$ and $\Sigma_{3,1}$ along
their boundaries via an annulus $A$ to form a closed surface
$\Sigma_5$ and define $\phi$ on $F_5$ by
$\phi|\Sigma_{2,1}=\phi_2|\Sigma_{2,1}$ and
$\phi|\Sigma_{3,1}=\phi_3^{-1}|\Sigma_{3,1}$, and then extend to $A$
by a continuous family of rotations through angles from
$\frac{\pi}{5}$ to $\frac{\pi}{7}$. The difference in speeds on the
boundary components is $\frac{2\pi}{35}$, and it follows that
$\phi^{35}$ is a Dehn twist $D_c$. By the uniqueness of the
reducible system and the argument similar in the proof of
Proposition \ref{seifert_unique_class}, one can verify $(\Sigma_5,
\phi)$ is a minimal element. One can construct infinitely many
minimal elements in such a way.
\end{example}

\begin{remark}   One can verify that 35 is
the largest order of a root of a Dehn twists on $\Sigma_5$. It is
amazing that the maximal order of roots of  Dehn twist along {\em
non-separating} curves, which is 11 on $\Sigma_5$ (and in general is
$2g+1$ in $\Sigma_g$), was determined only very recently by several
papers; see \cite{Margalit_Schleimer, McCullough_Rajeevsarathy, Mo}.
\end{remark}

\begin{example}\label{6} This example will be used in \S~\ref{graph_section}.
$\Sigma_{kn+1}$ can be presented as the union of $\Sigma_{1,n}$ and
$n$ copies of $\Sigma_{k,1}$ in a in symmetric way so that there is
an action $\tau_{n,k}$ of order $n$ which acts freely on the triple
$(\Sigma_{kn+1}, \Sigma_{1, n}, \cup_1^n \Sigma_{k, 1})$.

Let $D_c$ be the positive Dehn twist along one component $c$ of
$\partial \Sigma_{1, n}$ and let $\phi_{n,k}$ be the composition of
$D_c\circ \tau_{n,k}$. Then one can verify that $D_{n,k}=\phi_{n,
k}^n$ is D-type, and is given by the product of a positive Dehn
twist along each component of $\partial \Sigma_{1, n}$. For fixed
$k$, the automorphisms $(\Sigma_{kn+1,0},D_{n,k})$ and
$(\Sigma_{km+1}, D_{m,k})$ have a common cover $(\Sigma_{kmn+1},
D_{mn,k})$. Therefore for fixed $k$, $(\Sigma_{kn+1},\phi_{n, k})$
are in the same commensurability class for all $n$.

On the other hand one can verify by inspection that
$\Pi(D_{n,k})=\{(1, 0),(1/(2k-1),0)\}$. So $(\Sigma_{kn+1}, D_{n,k})$
and $(\Sigma_{k'm+1}, D_{m,k'})$ are not commensurable for $k\ne k'$
by Theorem~\ref{reducible}.
\end{example}

\begin{example}\label{||=1}
Each D-type map $(F, \phi)$ is commensurable with a D-type map $(F',
\psi)$ so that the Dehn twist on each $\gamma\in \Gamma(\psi)$ is a
single positive or negative Dehn twist. We can argue as below:

For simplicity, assume $F$ is closed, $ S(\phi)=\{S_i, i=1,...k\}$,
denote $d_\gamma=|I(\phi, \gamma)|$. By replacing $\phi$ by a power
if necessary, we may assume that $d_\gamma$ is an integer $>1$ for
each $\gamma\in \Gamma(\phi)$. Then for each $i$ there is a covering
$q_i: \tilde S_i\to S_i$ such that $q_i|: \tilde \gamma\to \gamma$
is of degree $d_\gamma$ for each component $\gamma\in
\partial S$ and each component $\tilde \gamma$ in
$q_i^{-1}(\gamma)$. One quick way to see this is to attach an
orbifold disk $D_\gamma$ of index $d_\gamma$ to each $\gamma\in
\partial S_i$. The result is $2$-dimensional orbifold which is good, 
since $\chi(S_i)<0$ and each
$d_\gamma>1$. This orbifold has a manifold cover (see
\cite{Thurston_notes}, Chapter~13), and the restriction to $S_i$
gives the required covering $q_i: \tilde S_i\to S_i$.

If $P$ is a planar surface of negative Euler characteristic, then
for every $n\ge 2$ coprime with the number of components of
$\partial P$, there is a cover $\hat{P}\to P$ of degree $n$, which
restricts to a cover of degree $n$ on each boundary component of
$\hat{P}$, and such that $\hat{P}$ is non-planar. Moreover, every
non-planar surface with negative Euler characteristic has a covering
of any given degree which is a covering of degree $1$ on each
boundary component. So after replacing $\phi$ by $\phi^n$, we can
find covers $\hat q_i: \hat S_i \to \tilde S_i$ a covering of degree
$n\prod_{k\ne i} \text{deg}(q_k)$ so that the restriction on each
component of $\partial \hat S_i$ is a covering of degree exactly
$n$. The coverings $p_i=q_i\circ \hat q_i: \hat S_i\to S_i$ match
compatibly to produce a covering $p: \tilde F=\cup \hat S_i \to F$
such that $p|: \tilde \gamma\to \gamma$ is of degree $nd_\gamma$ for
each $\gamma\in \Gamma(\phi)$ and each component $\tilde \gamma$ in
$p^{-1}(\gamma)$. Define a $D$-type map $\tilde \phi$ on $\tilde F$
with $I(\tilde \phi, \tilde \gamma)=1$  if $I(\phi, \gamma)>0$, and
$I(\tilde \phi, \tilde \gamma)= -1$ otherwise, then $\tilde \phi$
covers $\phi$ (see the paragraph before equation~\ref{eqn2} in the
proof of Theorem~\ref{reducible}.
\end{example}

Now we give an application of Proposition \ref{Pi-lambda} to
reducible maps which are not roots of D-type maps.

\begin{example}\label{5}
Let $F$ be a closed oriented surface of genus 2, and $c$ a
non-separating circle in $F$. Let $\phi$ be any pseudo Anosov map on
$F\setminus c$ with dilatation $\lambda(\phi)=K$ and twist angle
$2\pi r$ near $c$, $r\in \Q$, and let $\tau_c$ be a positive Dehn twist
along $c$. Then
\begin{enumerate}
\item{$\tau^{k_1}\circ\phi$ and $\tau^{k_2}\circ\phi$ are
commensurable if and only if $k_1=k_2$; and}
\item{$\tau\circ\phi^{k_1}$ and $\tau\circ\phi^{k_2}$ are
commensurable if and only if $k_1=k_2$.}
\end{enumerate}
The proofs of (1) and (2) are similar; we only give a proof of (1).
Note $\Pi(\tau^{k}\circ\phi)=(1/{(k-r)},0)$,  and
$\lambda(\tau^{k}\circ\phi)=\lambda(\phi)=K>1$ where $r$ and $K$
depends only on $\phi$. If $\tau^{k_1}\circ\phi$ and
$\tau^{k_2}\circ\phi$ are commensurable, by Proposition
\ref{Pi-lambda} and the fact we are considering the automorphism in
the same oriented surface $F$, we should have $\log{K}=
s\log{K}\,\, \text{and} \,\,1/{(k_1-r)}=
s^{-1}/{(k_2-r)}$ for some $s\in \Q_+$. The first equality
implies that $s=1$, and the second implies $k_1=k_2$.
\end{example}

\section{Commensurable and incommensurable bundles in graph manifolds}\label{graph_section}

In this section we give two more complicated examples. The first (Example~\ref{bounded}) is an example
of a graph manifold that is the total space of infinitely many incommensurable fibrations, and at
the same time fibers in infinitely many ways in the {\em same} commensurability class. The
second (Example~\ref{closed}) is an example of a graph manifold that is the total space of infinitely
many incommensurable fibrations, including two incommensurable fibrations with the same genus.
Both examples depend on a construction described in \S~\ref{construction_subsection}.

\subsection{Primary Construction}\label{construction_subsection}

Let $F$ be a compact  oriented surface with the induced orientation
on $\partial F$. Let $a$ be an essential oriented arc on $F$
connecting two different components of $\partial F$. Let $a_0$ and
$a_1$ be the two
 components of the quadrilateral  $\partial N(a)\setminus \partial F$ such that the
direction on $a_0$ induced from the orientation on $\partial N(a)$
is parallel to that on $a$; see Figure 5.

\begin{center}
\psfrag{a}[]{$a_0$} \psfrag{b}[]{$a_1$} \psfrag{c}[]{$a$}
\includegraphics[width=4in]{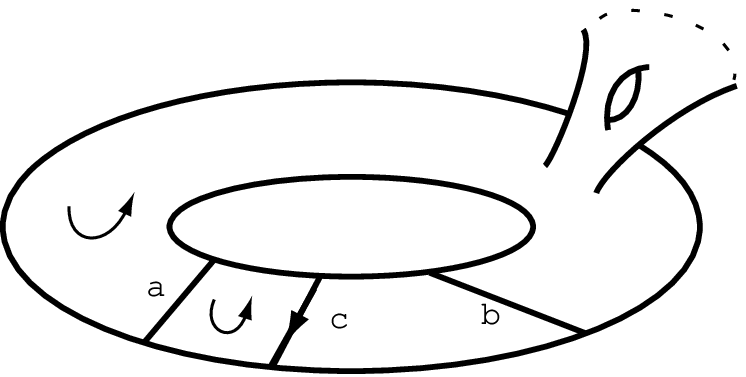}
\vskip 0.5 truecm \centerline{Figure 5}
\end{center}

Then in $F\times [0,1]$, the surface $F\times \{\frac in\}$
intersects the quadrilateral $a_{j}\times [0,1]$ in the arc
$a_{j,i}=a_{j}\times \{\frac in\}$ for each integer $n\ge 2$, where
$j=0,1, i=0,1,\ldots,n.$ Let $A_{1},\ldots,A_{n}$ be  $n$ pairwise
disjoint quadrilaterals properly embedded in $N(a)\times [0,1]$ so
that $A_i$ is a stair connecting $a_{0,i}$ and $a_{1,i+1}$; see
Figure 6(a). 

Let $F_i=(F\times \frac in)\setminus (N(a)\times
[0,1])$ and build a surface $R(a, n)=\cup_{i=0}^{n}F_i \cup
\cup_{l=1}^{n}A_{l}$ in $F\times [0,1]$; see Figure 6(b). A similar
surface $R(\alpha, n)$ in $F\times [0,1]$ can be constructed if we
replace $a$ by a disjoint union of essential arcs $\alpha$ on $F$.

We call the quotient of $R(\alpha, n)$ in $F\times S^1=[F, \text {id}]$
the {\em $n$-floor staircase} along $\alpha$ in $F\times S^1$, or just
{\em $n$-floor along $\alpha$} for short, and denote it as
$F(\alpha, n)$. Note that the surface $F(\alpha, n)$ is transverse to the $S^1$
fibers. If $\alpha$ is empty, then $F(\emptyset, n)$
is just $n$ disjoint copies of $F$ in $F\times S^1$.

Let  $S^1$ have the orientation induced from $[0,1]$. Then  both $F\times S^1$
and $\partial F\times S^1$ are oriented. For each component
$c\in \partial F$, the torus $c\times S^1$ has product coordinates $(c,t)$.
The proof of the following lemma is a routine verification:

\begin{lemma}\label{stair} Let $p: F\times S^1\to F$ be the projection.
Suppose that $\alpha\cap c\le 1$ for each component $c\in \partial F$. Then the
following are true:
\begin{enumerate}
\item{$p: F(\alpha, n)\to F$ is a cyclic covering of degree $n$.
Moreover  $F(\alpha, n)$ is a surface of genus $1-k+n(k-1+g)$ with
$n(\#\partial F-2k)+2k$ boundary components,  where $k=\#\alpha$.}
\item{$p^{-1}(c)$  is either connected or has $n$ components for each
component $c$ of $\partial F$, and $p^{-1}(c)$ is connected if and
only if $\alpha\cap c\ne \emptyset$. Moreover suppose $a$ is an arc
in $\alpha$ with tail in $c'$ and head in $c''$, then $\tilde
 c'=p^{-1}(c')$ has slope $(n, -1)$ and $\tilde c''= p^{-1}(c'')$ has slope
$(n, 1)$.}
\item{Let $\tilde \tau$ be the $2\pi/n$-rotation of $F\times S^1$
along the oriented $S^1$ factor, and let  $\tilde c'$ and $\tilde c''$ be as in
(2). Then $\tau$, the restriction $\tilde \tau$ on $F(\alpha, n)$ is
a generator of the deck group of the covering in (1), which
rotates $\tilde c'$ and $\tilde c''$ through $2\pi/n$ in negative and
positive directions respectively; see Figure 6b.}
\item{$F\times S^1=[F, \id]=[F(\alpha, n),  \tau]$, and
$p_{\alpha, n}: F(\alpha, n)\times S^1=[F(\alpha, n),  \tau^n]\to
F\times S^1=[F, \id]$ is a cyclic covering of degree $n$.}
\end{enumerate}
\end{lemma}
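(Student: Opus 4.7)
The lemma is a verification of the explicit topological and equivariant features of the staircase construction. The plan is to prove (1), (2), (3), (4) in an order in which each part feeds the next.

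For (1), I would first check that $p: F(\alpha,n)\to F$ is a covering. Above a point of $F\setminus\alpha$ there are exactly $n$ preimages, one on each floor $F_i$, and above a point of $\alpha$ the $n$ preimages lie on the stairs $A_l$. That $p$ is cyclic follows from (3) below. For the topological invariants, multiplicativity of Euler characteristic gives $\chi(F(\alpha,n))=n\chi(F)$, and (2) tells us that the $2k$ boundary circles of $F$ hit by $\alpha$ each lift to a single circle, while the remaining $\#\partial F-2k$ lift to $n$ disjoint circles; this yields $n(\#\partial F-2k)+2k$ boundary components. Solving $\chi=2-2g'-\#\partial$ for $g'$ produces the claimed genus.

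For (2), if $\alpha\cap c=\emptyset$ then $c\times\{i/n\}$ sits inside $F_i$ for each $i$, giving $n$ disjoint components. If instead $c=c'$ is the tail of an arc $a\in\alpha$, then in each $F_i$ the curve $c$ restricts to an arc $c_i$ with one endpoint on $a_{0,i}$ and the other on $a_{1,i}$; the stair $A_i$ glues the $a_{0,i}$-endpoint of $c_i$ to the $a_{1,i+1}$-endpoint of $c_{i+1}$, and threading through all $n$ floors produces a single connected circle. For the slope, I would trace one traversal of $\tilde c'$ in the torus $c'\times S^1$: it wraps $n$ times around $c'$, and the sign of the $S^1$-wrap is determined by comparing the orientation of $c'$ at the crossing with the convention that $a_0$ is parallel to $a$; this yields slope $(n,-1)$ at the tail, and the analogous computation at the head $c''$, where $a_1$ is anti-parallel to $a$, gives $(n,1)$. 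I expect this orientation bookkeeping to be the main obstacle in the proof, since it reduces to a local inspection best handled with reference to Figures~5 and 6 rather than a symbolic argument.

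For (3), the rotation $\tilde\tau$ sends $F_i\mapsto F_{i+1}$ (indices mod $n$), and if the stairs are chosen equivariantly (which we may arrange up to isotopy), it also sends $A_l\mapsto A_{l+1}$; hence $\tilde\tau$ preserves $F(\alpha,n)$. The restriction $\tau$ therefore cyclically permutes the floors and has order $n$, and since $\tau$ covers $\id$ on $F$ it generates the deck group of $p$. The opposite rotation senses on $\tilde c'$ and $\tilde c''$ are immediate from the opposite slope signs in (2) combined with the fact that $\tilde\tau$ advances the $S^1$-factor uniformly by $+2\pi/n$. For (4), the first equality is tautological; for the second, $F(\alpha,n)$ is connected, two-sided, and transverse to the $S^1$-fibration, so cutting $F\times S^1$ along $F(\alpha,n)$ gives a single piece, and the cut $S^1$-fibers (each sliced into $n$ equal arcs by the staircase) sweep out an $I$-bundle over $F(\alpha,n)$ which must be the trivial bundle $F(\alpha,n)\times I$. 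The regluing homeomorphism is the deck transformation $\tau$ from (3), so $F\times S^1=[F(\alpha,n),\tau]$, and the standard degree-$n$ cyclic cover $[F(\alpha,n),\tau^n]=F(\alpha,n)\times S^1\to[F(\alpha,n),\tau]=F\times S^1$ is $p_{\alpha,n}$.
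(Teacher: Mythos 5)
Your verification is correct, and it fills in exactly the ``routine verification'' that the paper itself omits (the paper offers no proof beyond a reference to Figure~6): counting preimages floor by floor, threading the boundary circles through the stairs to get the slopes $(n,\mp 1)$, observing that the $2\pi/n$-rotation permutes floors and stairs equivariantly, and deducing the genus from multiplicativity of $\chi$ together with the boundary count. The only point you rightly flag as requiring care --- the sign of the $S^1$-wrapping at the tail versus the head of $a$ --- is indeed a local orientation check best read off from Figures~5 and~6, which is how the paper treats it.
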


\begin{center}
\psfrag{a}[]{$F\times 1$} \psfrag{b}[]{$F\times \frac{i+1}{n}$}
\psfrag{c}[]{$F\times\frac{i}{n}$} \psfrag{d}[]{$F\times 0$}
\psfrag{e}[]{$a\times 1$} \psfrag{f}[]{$a_{1,i+1}$}
\psfrag{g}[]{$a_{0,i}$} \psfrag{h}[]{$A_i$}
\psfrag{k}[]{$\widetilde{c'}$} \psfrag{l}[]{$\widetilde{c''}$}
\psfrag{m}[]{$c'$} \psfrag{n}[]{$c''$}
\psfrag{p}[]{$\phi$} \psfrag{q}[]{$a$}
\psfrag{i}[]{$(a)$} \psfrag{j}[]{$(b)$}
\includegraphics[width=4in]{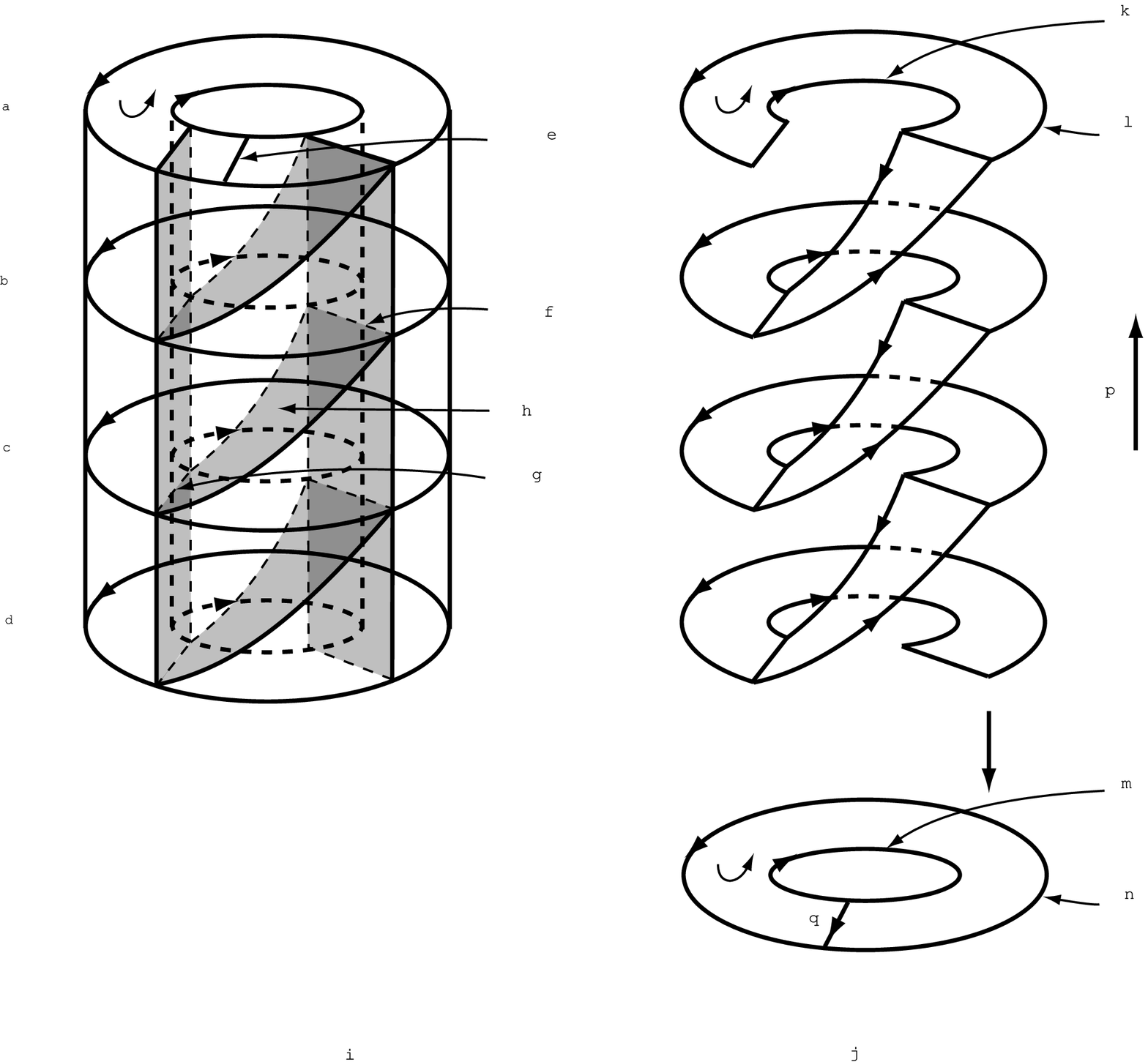}
\vskip 0.5 truecm \centerline{Figure 6}
\end{center}

\begin{remark}\label{circle case}
We can perform a similar construction for a non-separating circle
$\gamma$ in $F$, in which case the description of the
boundary is much simpler: each component of $\partial F$ gives rise to
precisely $n$ copies of $\partial F(\gamma, n)$.
\end{remark}

\subsection{Examples}

\begin{example}\label{bounded}
We describe a graph manifold with the following properties:
\begin{enumerate}
\item{it admits fibrations representing infinitely many fibered commensurability classes;}
\item{it admits infinitely many fibrations representing the {\em same} fibered
commensurability class.}
\end{enumerate}

First take $M=[F_1,\phi_1]$ where the oriented surface $F_1$ and the monodromy
$\phi_1$ are as shown in Figure~7. Note that $M$ has two boundary components
and $\phi$ is D-type and definite.

\begin{center}
\psfrag{a}[]{$S_1$} \psfrag{b}[]{$S_2$} \psfrag{c}[]{$S_3$}
\psfrag{e}[]{$1$} \psfrag{f}[]{$1$}
\includegraphics[width=3.5in]{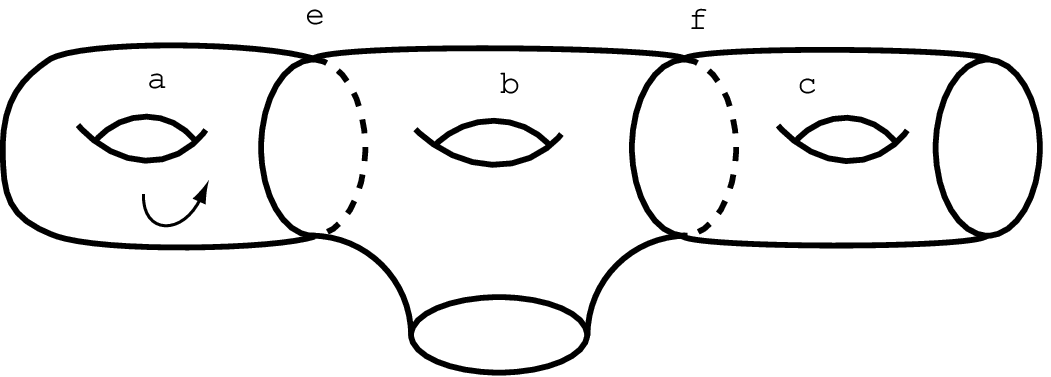}
\vskip 0.5 truecm \centerline{Figure 7}
\end{center}

Another view of $M$ is given in Figure~8, where every
component is of the form $S_i \times S^1$, (depicted in the figure as an $S_i \times I$)
for $i=1,2,3$, and two pairs of boundary tori are identified by maps $f$ and $g$
expressed in terms of coordinates by the maps
$$f(1,0)=(-1,0)\ \ f(0,1)=(-1,1);\ g(1,0)=(-1,0)\ \ g(0,1)=(-1,1)$$
Recall that this notation means that each $(1,0)$ denotes the homotopy class of some
component of some $\partial S_i$, and each $(0,1)$ denotes an $S^{1} \times *$.

\begin{center}
\vskip 0.5 truecm
\psfrag{a}[]{$f$} \psfrag{b}[]{$g$}
\psfrag{e}[]{$S_1\times S^1$} \psfrag{f}[]{$S_2\times S^1$}
\psfrag{g}[]{$S_3\times S^1$}
\includegraphics[width=3.5in]{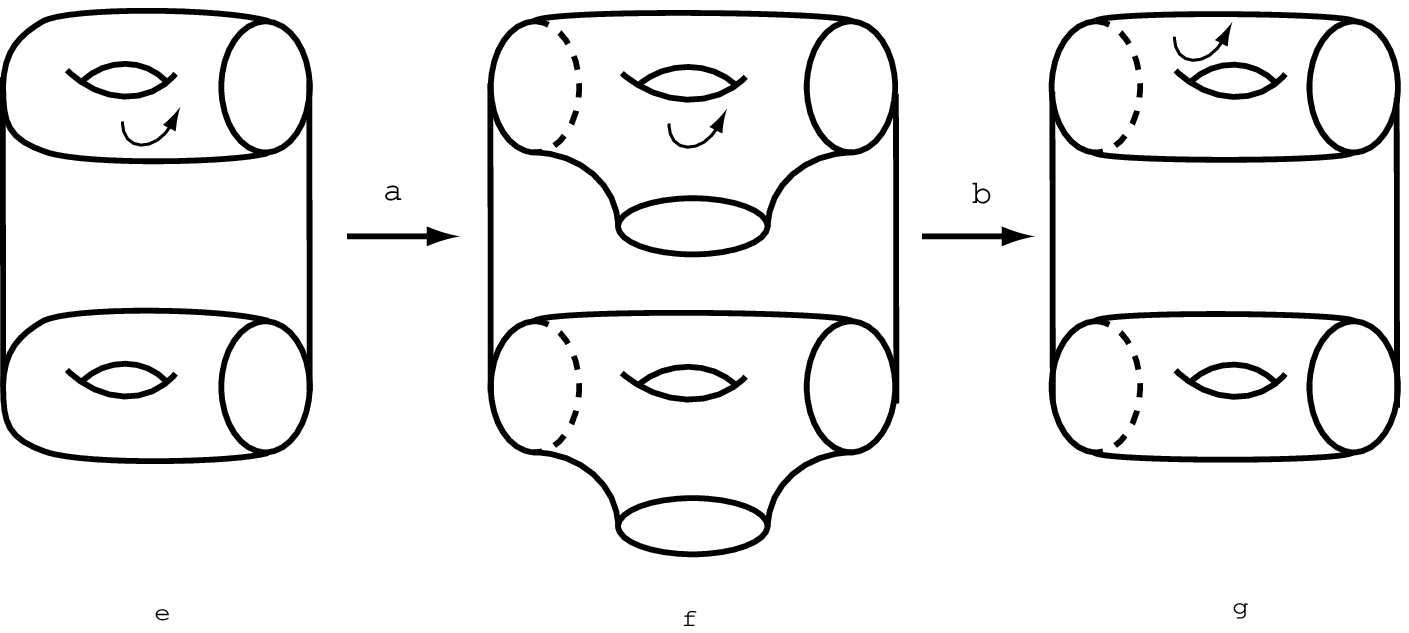}
\vskip 0.5 truecm \centerline{Figure 8}
\end{center}

Now we construct another surface fibration of the same underlying manifold
$M=[F_2,\phi_2]$ as follows. Pick oriented arcs $\alpha_i\in S_i$, $i=2,3$ as in Figure 9.
Then construct $S'_1=S_1(\emptyset, 2)$, $S'_2=S_2(\alpha_2, 2)$,
$S'_3=S_3(\alpha_3, 3)$ in $S_i\times S^1$, $i=1,2,3$, respectively.

\begin{center}
\vskip 0.5 truecm
\psfrag{a}[]{$2$} \psfrag{b}[]{$2$} \psfrag{c}[]{$3$}
\psfrag{e}[]{$S_1(\phi,2)$} \psfrag{f}[]{$S_2(\alpha_2,2)$}
\psfrag{g}[]{$S_3(\alpha_3,3)$}
\psfrag{i}[]{$\alpha_2$} \psfrag{j}[]{$\alpha_3$}
\includegraphics[width=4in]{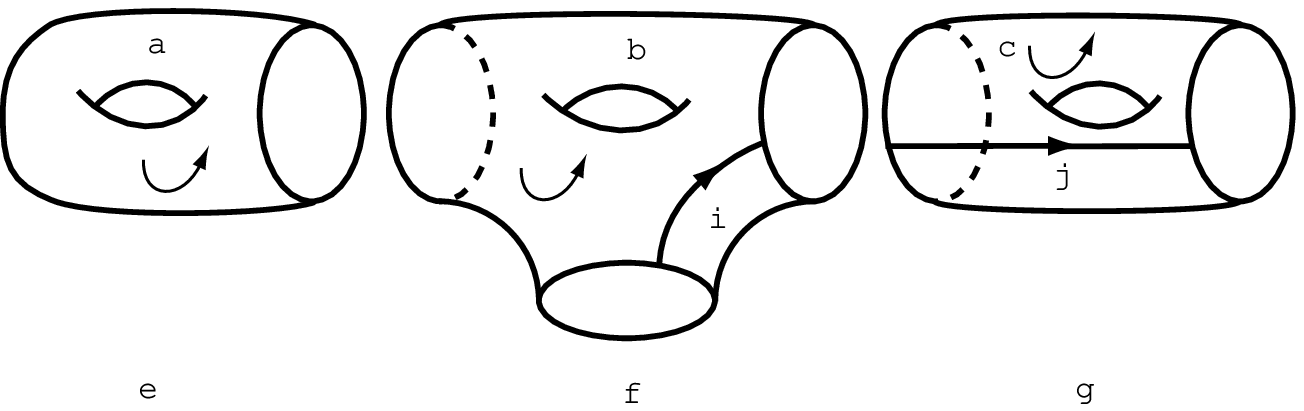}
\vskip 0.5 truecm \centerline{Figure 9}
\end{center}

By Lemma~\ref{stair}~(1), it is easy to see that $S_1'$ is two
copies of $S_1$, that $S_2'$ is a surface of genus 2 with 4 boundary
components, and that $S_3'$ is a surface of genus 3 with 2 boundary
components. By Lemma~\ref{stair}~(2), we see that $\tilde c_2'$ is of slope $(2, 1)$ in
$c_2'\times S^1$, and $\tilde c_3''$ is of slope $(-3, 1)$ in
$c_3''\times S^1$,

Since $g$ sends $(2,1)$ to $(-3,1)$, the maps $f$ and $g$ match $S_1'$,
$S_2'$ and $S_3'$ together to produce a new surface $F_2$ in $M$.
Let $\tau_i$ be the generator of the (cyclic) deck group for the
covering $p_i:S'_i\to S_i$ given by Lemma~\ref{stair}~(3). Then
$\tau_1, \tau_2, \tau_3$ have periods 2, 2, 3 respectively. Now the
new surface bundle structures $[S_i, \tau_i]$ in $S_i\times S^1$
given by Lemma~\ref{stair}~(4), $i=1,2,3$, match to produce a
new surface bundle structure of $M$, which we denote by $[F_2, \phi_2]$.

The monodromy map $\phi_2$ is a virtual D-type automorphism
whose restriction on each $S_i'$ is  $\tau_i$.
Hence $\phi_2$ permutes the two copies of $S_1$ in $F_2$. Moreover
under this permutation, each copy also undergoes a
half-twist relative to $S_2'$. By Lemma~\ref{stair}~(3), $\tau_2$
rotates $\tilde c''_2$ by $\pi$ and $\tau_3$ rotate $\tilde c'_3$ by
$-\frac{2}{3} \pi$ respective along the directions shown in Figure~7.
So the relative twist at $S_2'\cap S_3'$
is $\pi-\frac{2\pi}{3}=\frac{1}{3} \pi$. Now
$\phi_2^6$ is a D-type automorphism as shown in Figure~10.

\begin{center}
\vskip 0.5 truecm
\psfrag{a}[]{$S_1'$} \psfrag{b}[]{$S_2'$} \psfrag{c}[]{$S_3'$}
\psfrag{d}[]{$3$} \psfrag{e}[]{$1$}
\includegraphics[width=3.5in]{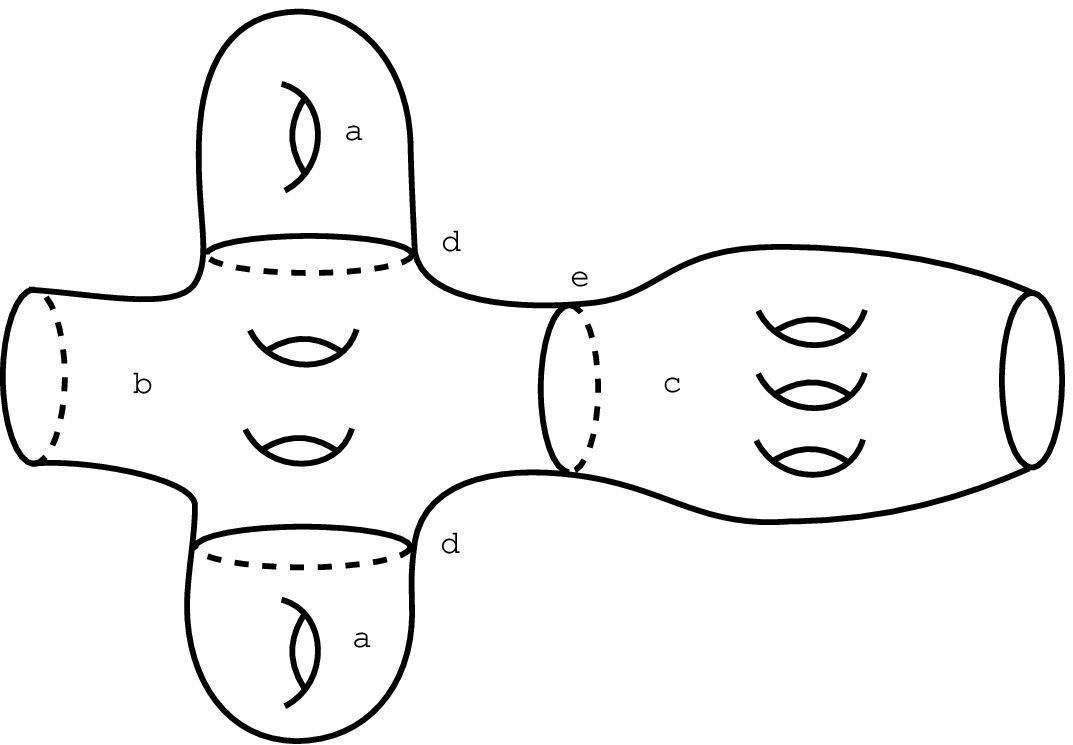}
\vskip 0.5 truecm \centerline{Figure 10}
\end{center}

A direct computation gives
$$\Pi(\phi_1)=\{(1,0),(\tfrac 23, 0), (\tfrac 12, 0) \text{ and }
\Pi(\phi_2)=\{(2,0), (\tfrac{5}{3},0), (1,0)\}$$
Consequently there is no $s \in \Q$
so that $\Pi(\phi_1)\flip s\Pi(\phi_2)$. By Theorem~\ref{reducible}, $(F_1,\phi_1)$ and
$(F_2,\phi_2)$ are not commensurable.

\smallskip
If we perform a similar construction starting from
$S_1(\emptyset, n)$, $S_2(\alpha_2, n)$, $S_3(\alpha_3, n+1)$ in
$S_i\times S^1$, $i=1,2,3$, we will get a surface bundle structure
$[F_n, \phi_n]$ on $M$, where $\phi_n$ is a virtual D-type
automorphism and $\phi_n^{n(n+1)}$ is a D-type automorphism, and
$\Pi(\phi_n)=\{(n, 0), (\frac{2n+1}{3}, 0), (\frac n2, 0) \}$. So
for any positive integers $i\ne j$, the automorphisms
$(F_i,\phi_i)$, $(F_j,\phi_j)$
are not commensurable. We have verified that $M$ fibers in infinitely many
incommensurable ways.

On the other hand if we start from $S_1(\gamma, n)$, $S_2(\emptyset,
n)$ and $S_3(\emptyset, n)$, where $\gamma$ is a non-separating
circle in $S_1$, then by Remark~\ref{circle case} and the argument
above, we can produce a fibration of $M$ with monodromy
$(\Sigma_{2n+1, 2n}, \phi_{2, n})$, where we adapt the notations in
Example~\ref{6}, and use $\Sigma_{2,3} = S_2 \cup S_3$ in place of
$\Sigma_{2,1}$. As observed in Example~\ref{6}, the automorphisms
$(\Sigma_{2n+1, 2n}, \phi_{2, n})$ are commensurable for all $n$. So
$M$ admits infinitely many distinct but commensurable fibrations, as
claimed.
\end{example}

\begin{remark}\label{two boundaries}
One can modify the construction in Example~\ref{bounded} to a more general
setting where the arc connecting two boundary components of $F$ passes through
the cores of more than one Dehn twist. For simplicity, consider a D-type map
which is either a single positive or negative Dehn twist on each $\gamma \in \Gamma(\phi)$
(compare with Example~\ref{||=1}). Then one always gets
infinitely many fibered commensurability classes unless the $\chi(S_i)$
satisfy a certain linear equation so that the invariants in \S~4 fail to distinguish
them, where $S_i$'s are  pieces of $F\setminus \Gamma(\phi)$ meeting the arc.
\end{remark}

\begin{example}\label{closed}
We now give an example of a {\em closed} graph manifold which fibers in infinitely many
incommensurable ways, including two incommensurable fibrations with fibers of the same
genus.

\smallskip

Let  $M=[F, \phi]$ be the graph manifold with $\phi$ as indicated in Figure~11.

\begin{center}
\vskip 0.5 truecm
\psfrag{a}[]{$S_1$} \psfrag{b}[]{$S_2$} \psfrag{c}[]{$S_3$}
\psfrag{d}[]{$2$} \psfrag{e}[]{$-2$} \psfrag{f}[]{$-1$}
\psfrag{g}[]{$1$}
\includegraphics[width=3.5in]{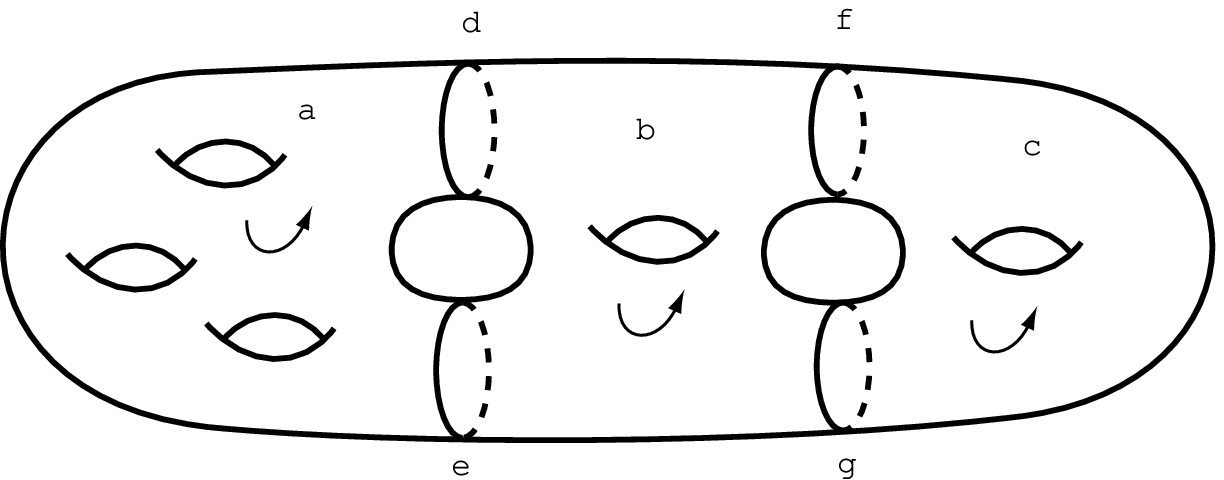}
\vskip 0.5 truecm \centerline{Figure 11}
\end{center}

Our discussion of Figure~8 in Example~\ref{bounded} applies mutatis mutandis
to Figure~12, with gluings given by
\begin{align*}
f_1(1,0)=(-1,0)\ \ f_1(0,1)=(2,1);\ f_2(1,0)=(-1,0)\ \
f_2(0,1)=(-2,1)  \\
g_1(1,0)=(-1,0)\ \ g_1(0,1)=(-1,1);\ g_2(1,0)=(-1,0)\ \
g_2(0,1)=(1,1)
\end{align*}

\begin{center}
\vskip 0.5 truecm
\psfrag{a}[]{$f_1$} \psfrag{b}[]{$f_2$} \psfrag{c}[]{$g_1$}
\psfrag{d}[]{$g_2$}
\psfrag{e}[]{$S_1\times S^1$} \psfrag{f}[]{$S_2\times S^1$}
\psfrag{g}[]{$S_3\times S^1$}
\includegraphics[width=3.5in]{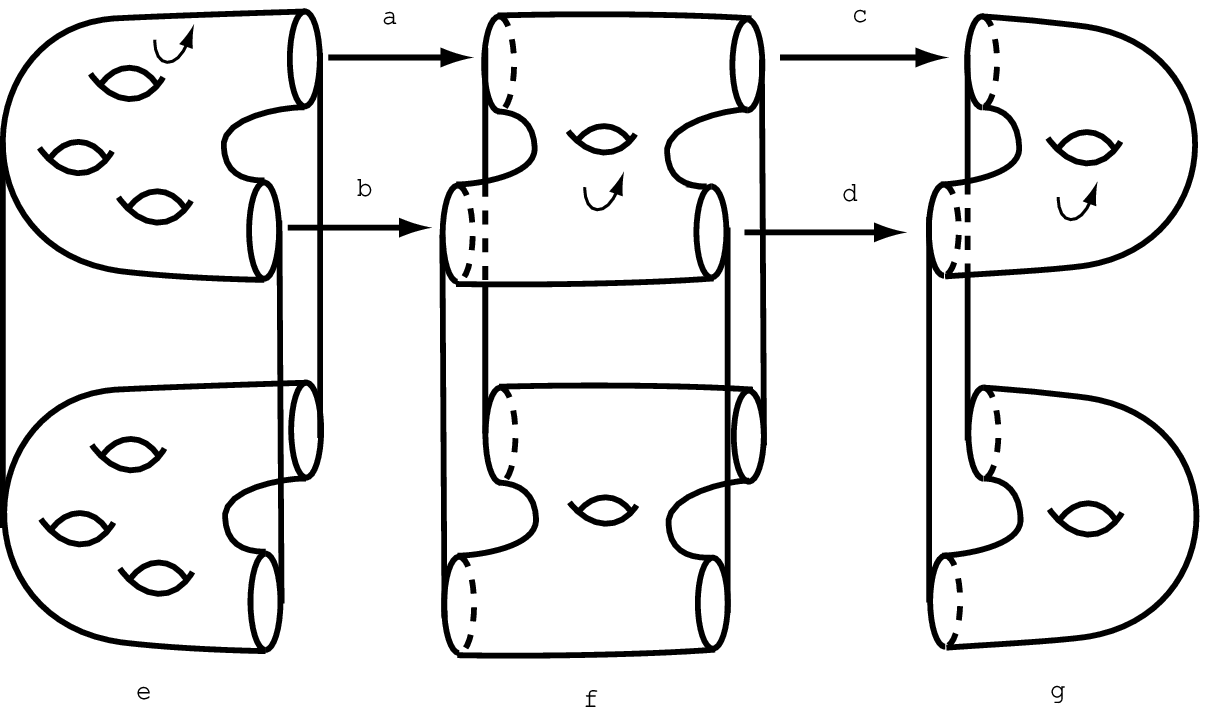}
\vskip 0.5 truecm \centerline{Figure 12}
\end{center}

First we construct infinitely many commensurability classes of fibrations of $M$.

Pick oriented arcs $\alpha_i\in S_i$, $i=1, 2,3$ as in Figure~13 and
 construct $S_1'=S_1(\alpha_1, 4)$, $S'_2=S_2(\alpha_2, 2)$,
$S'_3=S_3(\alpha_3, 3)$ in $S_i\times S^1$, $i=1,2,3$, respectively.
Then $f_i$ and $g_i,\ i=1,2$ paste the boundary of $S_i'$ together
to produce another bundle structure on $M$; i.e.\ we have
$M=[\Sigma_{20}, \phi_2]$, where $\phi_2^{12}$ is a D-type
automorphism on the surface of genus $20$. We can check that
$(\Sigma_{20},\phi_2^{12})$ is as shown in Figure~14 and has
invariant
$\Pi(\phi_2)=\{(\frac{1}{6},\frac{1}{6}),(\frac{5}{4},\frac{5}{4}),(1,1)\}.$

We can perform a similar construction starting from $S_1(\alpha_1,
n+2)$, $S_2(\alpha_2, n)$ and $S_3(\alpha_3, n+1)$ in $S_i\times
S^1$, $i=1,2,3$, and obtain a surface bundle structure
$[\Sigma_{6n+8}, \phi_n]$ on $M$, where $\phi_n^{n(n+1)(n+2)}$ is a
D-type automorphism of a surface of genus $6n+8$ and
$\Pi(\phi_n)=\{(\frac n{12}, \frac n{12}), (\frac{3n+4}{8},
\frac{3n+4}{8}), (\frac n2, \frac n2) \}$. So for any positive
integers $i\ne j$, $(\Sigma_{6i+8},\phi_i)$,
$(\Sigma_{6j+8},\phi_j)$ are incommensurable.

\begin{center}
\psfrag{a}[]{$\alpha_1$} \psfrag{b}[]{$\alpha_2$}
\psfrag{c}[]{$\alpha_3$}  \psfrag{e}[]{$S_1(\alpha_1,4)$}
\psfrag{f}[]{$S_2(\alpha_2,2)$} \psfrag{g}[]{$S_3(\alpha_3,3)$}
\psfrag{i}[]{$4$} \psfrag{j}[]{$2$} \psfrag{k}[]{$3$}
\includegraphics[width=3.5in]{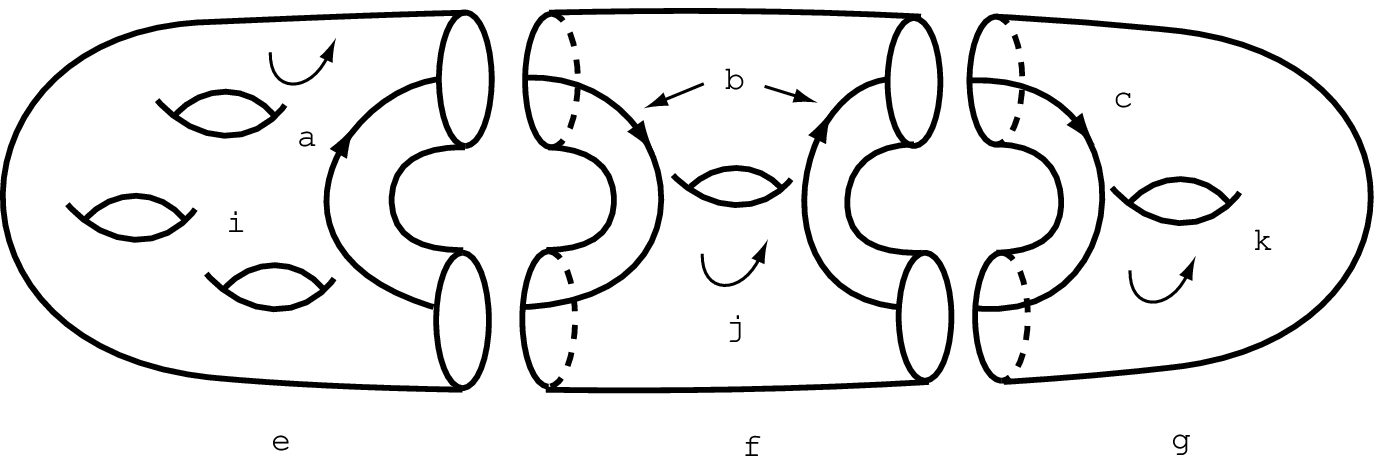}
\vskip 0.5 truecm \centerline{Figure 13}
\end{center}

\begin{center}
\psfrag{a}[]{$S_1'$} \psfrag{b}[]{$S_2'$} \psfrag{c}[]{$S_3'$}
\psfrag{d}[]{$3$}
\psfrag{e}[]{$-3$} \psfrag{f}[]{$-2$} \psfrag{g}[]{$2$}
\psfrag{i}[]{genus 12} \psfrag{j}[]{genus 3} \psfrag{k}[]{genus 3}
\includegraphics[width=3.5in]{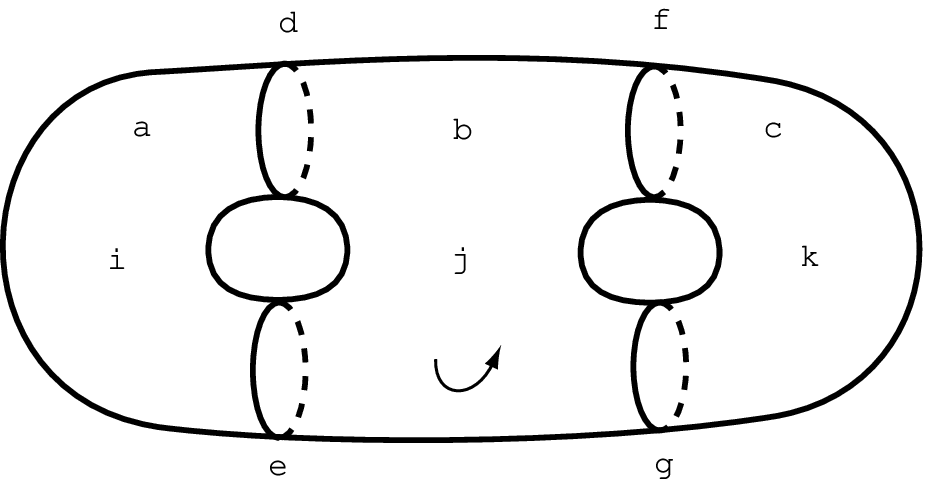}
\vskip 0.5 truecm \centerline{Figure 14}
\end{center}

Now we construct another surface bundle structure $[\Sigma_{20},
\psi]$ on $M$, which is not commensurable with $(\Sigma_{20},
\phi_2)$, where $\phi_2$ is the automorphism above.

Pick oriented arcs $\alpha_i\in S_i$, $i=1, 2,3$ as in Figure~15 and
construct $S_1'=S_1(\emptyset, 3)$, $S'_2=S_2(\alpha_2, 3)$,
$S'_3=S_3(\alpha_3, 4)$ in $S_i\times S^1$, $i=1,2,3$, respectively.
Then $f_i$ and $g_i,\ i=1,2$ glue the boundary of $S_i'$ together to
provide $M$ another structure of surface bundle: $M=[\Sigma_{20},
\psi]$, where $\psi^{12}$ is a D-type automorphism on $\Sigma_{20}$
of genus $20$. We can check that $(\Sigma_{20},\psi^{12})$ is as
shown in Figure~14 and has invariants
$\Pi(\psi)=\{(\frac{1}{4},\frac{1}{4}),(\frac{11}{8},\frac{11}{8}),(\frac{3}{2},\frac{3}{2})\}$.

\begin{center}
\psfrag{a}[]{$\phi$} \psfrag{b}[]{$\alpha_2$}
\psfrag{c}[]{$\alpha_3$} \psfrag{e}[]{$S_1(\phi,3)$}
\psfrag{f}[]{$S_2(\alpha_2,3)$} \psfrag{g}[]{$S_3(\alpha_3,4)$}
\psfrag{i}[]{$3$} \psfrag{j}[]{$3$} \psfrag{k}[]{$4$}
\includegraphics[width=3.5in]{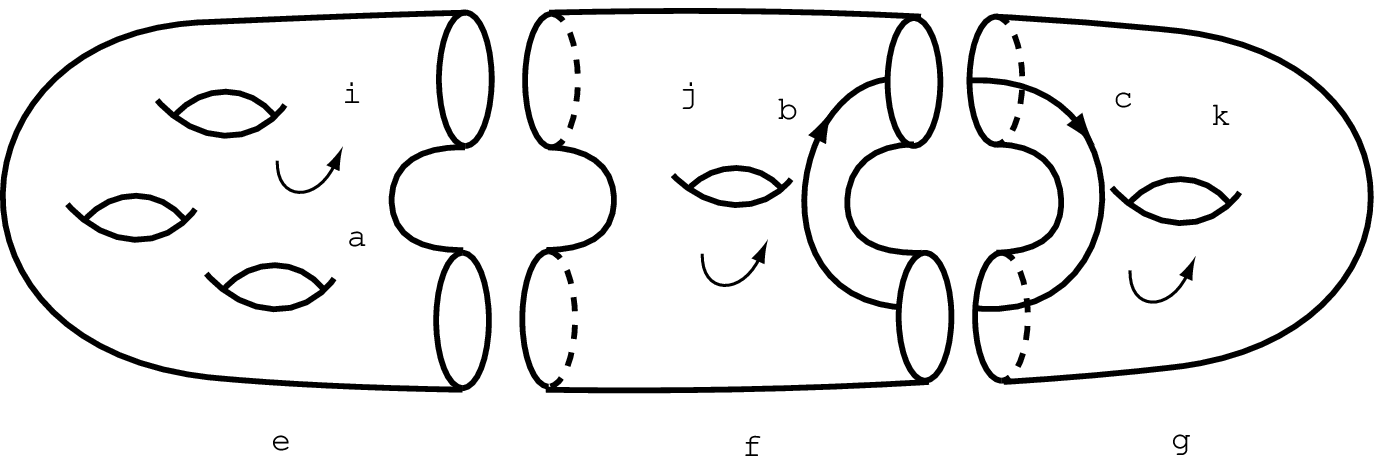}
\vskip 0.5 truecm \centerline{Figure 15}
\end{center}

\begin{center}
\psfrag{a}[]{genus 3} \psfrag{b}[]{genus 4} \psfrag{c}[]{$S_1'$}
\psfrag{d}[]{$S_2'$}
\psfrag{e}[]{$S_3'$} \psfrag{f}[]{$-1$} \psfrag{g}[]{$1$}
\psfrag{i}[]{-8} \psfrag{j}[]{8}
\includegraphics[width=3.5in]{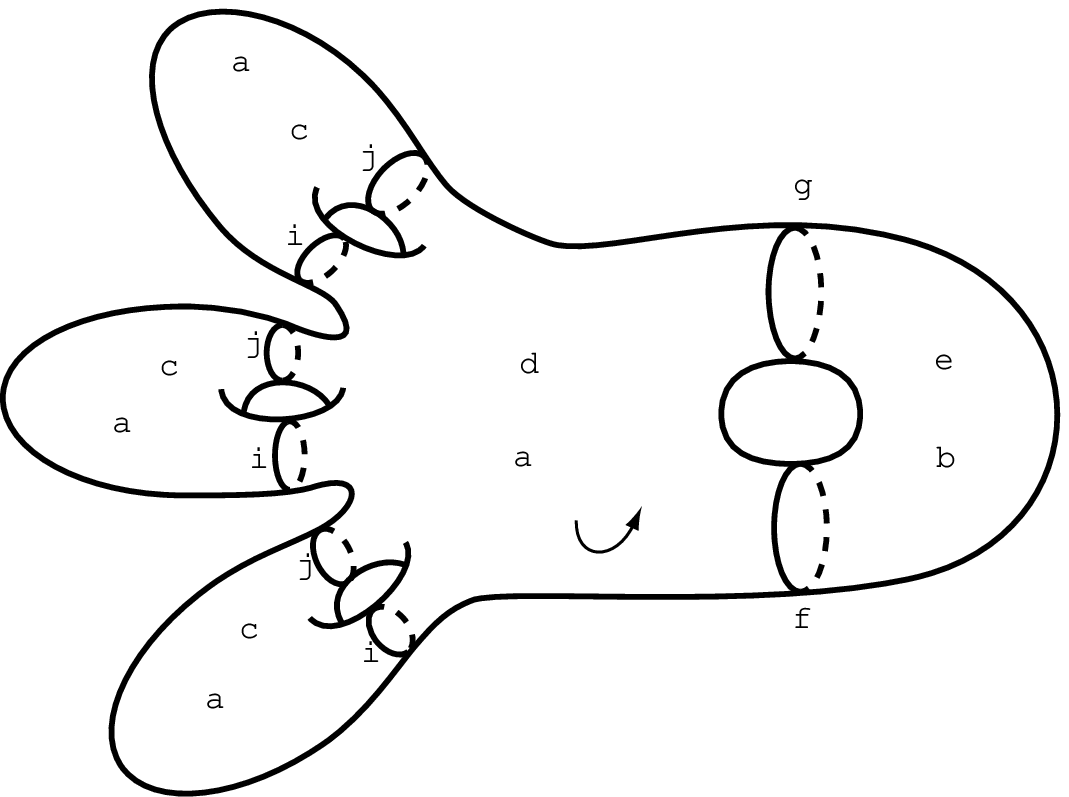}
\vskip 0.5 truecm \centerline{Figure 16}
\end{center}

By Theorem~\ref{reducible} we deduce that $(F_2,\psi)$ and $(F_2,\phi_2)$ are not commensurable,
as claimed.
\end{example}


\begin{thebibliography}{999}
\bibitem{Agol} I.~Agol, {\em Virtual betti numbers of symmetric spaces}, eprint arXiv:math/0611828
\bibitem{Anderson} J.~Anderson, {\em Incommensurability criteria for Kleinian groups},
Proc. AMS {\bf 130} (2002), no. 1, 253--258
\bibitem{Borel} A.~Borel, {\em Commensurability classes and volumes of hyperbolic 3-manifolds.} Ann.
Scuola Norm. Sup. Pisa Cl. Sci. (4) 8 (1981), no. 1, 1--33.
\bibitem{Behrstock_Neumann} J. Behrstock and W. Neumann, {\em Quasi-isometric classification
of non-geometric 3-manifold groups}, eprint arXiv:1001.0212
\bibitem{FLP} A.~Fathi, F.~Laudenbach and V.~Po\' enaru, {\em Travaux de Thurston sur les surfaces}.
Ast\' erisque SMF {\bf 66-67} (1979)
\bibitem{Hironaka} E.~Hironaka, {\em Small dilatation pseudo-Anosov mapping classes
coming from the simplest hyperbolic braid}, eprint arXiv:0909.4517 (corrected version)
\bibitem{JG} B.~J.~Jiang and J.~H.~Guo, Fixed Points of Surface
Diffeomorphisms. Pacific J. Math. 160 (1993), 67-89.
\bibitem{Macbeath} A.~M.~Macbeath, {\em Commensurability of co-compact three-dimensional
hyperbolic groups}, Duke Math. J. {\bf 50} (1983), no. 4, 1245--1253
\bibitem{Margalit_Schleimer} D.~Margalit and S.~Schleimer, {\em Dehn twists have roots},
Geom. Topol. {\bf 13} (2009) 1495-1497.
\bibitem{Margulis} G.~A.~Margulis, {\em Discrete Subgroups of Semisimple Lie
Groups,} Ergebnisse der Mathematik und ihrer Grenzgebiete 17,
Springer-Verlag, New York, 1991.
\bibitem{Ma} W.S.~Massey, {\em Finite covering spaces of $2$-manifolds with boundary.}
Duke Math. J. {\bf 41} (1974), no. 4, 875-887.
\bibitem{McCullough_Rajeevsarathy}
D.~McCullough and K.~Rajeevsarathy, {\em Roots of Dehn twists},
e-print, arXiv:0906.1601v1
\bibitem{Masur_Tabachnikov} H.~Masur and S.~Tabachnikov, {\em Rational billiards and flat structures},
Handbook of dynamical systems, Vol. 1A, 1015--1089, North-Holland, Amsterdam, 2002.
\bibitem{Mo} N.~Monden, {\em On roots of Dehn twists}. e-print arXiv:0911.5079
\bibitem{MS} S.~Myers and N.~Steenrod, {\em The group of isometries of a Riemannian manifold}.
Ann. Math. {\bf 40} (1939), 400--416
\bibitem{Neumann} W.~Neumann, {\em Commensurability and virtual fibration for graph
manifolds}, Topology {\bf 36} (1997), no. 2, 355--378
\bibitem{Nielsen} J.~Nielsen, {\em Jakob Nielsen: collected mathematical papers, Vol. 1},
Contemporary Mathematicians. Birkh\"auser Boston Inc., Boston, MA
1986
\bibitem{Rolfsen} D.~Rolfsen, {\em Knots and links}, Math. Lecture Series No. 7, Publish or Perish,
Berkeley 1976
\bibitem{St} F. ~Steiger,  {\em maximalen Ordnungen periodischer topologischer
Abbildungen geschlossener Flachen in sich}, Comment. Math. Helv. 8
(1935), 48--69.
\bibitem{Th1} W.~Thurston, {\em On the geometry and dynamics of
diffeomorphisms of surfaces}, Bull. Amer. Math. Soc. 19 (2) (1988),
417-438.
\bibitem{Th2}
W.~Thurston, {\em Three-dimensional geometry and topology. Vol.~1}.
Edited by Silvio Levy. Princeton Math. Ser., {\bf 35}, Princeton University Press,
Princeton, NJ, 1997
\bibitem{Thurston_notes} W.~Thurston, {\em The Geometry and Topology of Three-Manifolds},
a.k.a. ``Thurston's notes'', available from the MSRI
\bibitem{Wu} Y.~Q.~Wu,  {\em Canonical reducing curves of surface
homeomorphisms.} Acta Math. Sinica (N.S.) 3 (1987), no. 4, 305--313.

\end{thebibliography}
\end{document}